\renewcommand{\thepage}{\arabic{page}}
\newtheorem{teo}{{\it \textbf{Theorem}}}
\newtheorem{lem}{{\it \textbf{Lemma}}}
\newtheorem{rem}{{\it \textbf{Remark}}}
\providecommand{\keywords}[1]
{
  \small	
  \textbf{\textit{Keywords---}} #1
}
\providecommand{\Classification}[1]
{
  \small	
  \textbf{\textit{Classification---}} #1
}
\title{{\Large Global dynamics of a two-strain flu model with a single vaccination and general incidence rate}}
\author{{\normalsize Arturo J. Nic May$^*$ and Eric J. Avila Vales$^*$}\\
        {\small$^*$ Facultad de Matemáticas, Universidad Autónoma de Yucatán, Anillo Periférico Norte,}\\{\small Tablaje 13615, Mérida, Yucatán C.P. 97119, México.} \\
        {\small  E-mail addresses: arturo\_javier\_1559@hotmail.mx, avila@correo.uady.mx} %\\
}
\date{} % Comment this line to show today's date
\begin{document}
\maketitle
\textbf{Resumen:} En este artículo tratamos la dinámica global de un modelo de gripe de dos cepas con vacuna solamente para la cepa 1 y una tasa de incidencia general.  La dinámica  global del modelo se determina a través de funciones adecuadas de Lyapunov. Ilustramos nuestros resultados por simulaciones numéricas.\\
\\  
\textbf{Abstract:} In this his paper, we studied the global dynamics of a two-strain flu model with a single-strain vaccine and general incidence rate. Four equilibrium points were obtained and the global dynamics of the model are completely determined via suitable lyapunov functions. We illustrate our results by some numerical si\-mu\-la\-ti\-ons.\\  
\\
\keywords{Globally asymptotically stable, Lyapunov functional, influenza.}
\\
\Classification{34D23, 37B25, 92D30.}\\
\newpage
\section{Introduction}
Seasonal influenza is an acute respiratory infection caused by influenza viruses which circulate in all parts of the world. Worldwide, these annual epidemics are estimated to result in about 3 to 5 million cases of severe illness, and about 290 000 to 650 000 respiratory deaths [1]. This infection can have an endemic, epidemic or pandemic behavior.

There were, three major flu pandemics during the 20th century, the so called Spanish flu in
1918 had, been the most devastating pandemic. It has been estimated that the Spanish flu claimed
around 40–50 million deaths (as much as 3 \% of the total population), and it also infected 20–40\%
of the whole population. In 1957–1958, the Asian flu or bird flu pandemic caused more than two million deaths [2]. Unlike the Spanish flu, this time the infection-causing virus was detected earlier due to the advancement of scien\-ce and technology. A vaccine was made available but with limited supply.
After a decade (in 1968), a flu pandemic that originated again from Hong Kong hit mankind. That
flu pandemic also claimed one million lives. Beside these three major ones, there are some
other flu pandemics spreading among nations on smaller scales. For instance, the 2009 H1N1
swine flu is one of the more publicized pandemics that attracted the attention of all scientists and health professionals in the world and made them very much concerned. The pandemic, however,
did not result in great casualties like before. As of July 2010, only about 18,000 related deaths
had been reported [2]. There are many methods of preventing the spread of infectious
disease, one of them is vaccination. Vaccination is the ad\-mi\-nis\-tra\-tion of agent-specific, but relatively harmless, antigenic components that in vaccinated individuals can induce protective immunity against the corresponding infectious agent [3].

Influenza causes serious public-health problems around the world, therefore, we need to understand transmission mechanism and control strategies. Mathematical models also provided insight into severity of past influenza epidemics. Some models were used to investigate the three most de\-vas\-ta\-ting historical pandemics of influenza in the 20th century  [4–6]. There are a lot of  pathogens with several circulating strains. The presence of them is mostly due to incorrect treatment. 

Rahman and Zou [2] proposed a two-strain model with a single vaccination, namely. 
\begin{eqnarray}\label{tw1}
 \dot{S}&=&\Lambda- \left(\beta_1 I_1 + \beta_2 I_2 +\lambda \right)S \nonumber \\
 \dot{V_1}&=&rS-(\mu + k I_2)V_1\nonumber\\
 \dot{I_1}&=&\beta_1 I_1 S-\alpha_1 I_1 \nonumber \\
 \dot{I_2}&=&\beta_2 I_2 S+kI_2 V_1-\alpha_2 I_2 \nonumber \\
 \dot{R}&=&\gamma_1I_1+\gamma_2 I_2-\mu R.  
\end{eqnarray}
Where $\lambda=r+\mu$,  $\alpha_1=\gamma_1+v_1+\mu$,  $\alpha_2=\gamma_2+v_2+\mu$. The compartments are $S(t)$, $V_1(t)$, $I_1(t)$, $I_2(t)$ and  $R(t)$ which denote the population of susceptible, vaccine of strain 1, infective with respect to strain 1, infective with respect to strain 2 and removed individuals at time t, respectively. And  
\begin{itemize}
\item $\Lambda$ is the constant recruitment of individuals.
\item $\displaystyle \frac{1}{\mu}$ is the average time of life expectancy.
\item $r$ is the rate of vaccination with strain 1.
\item $k$ is the transmission coefficient of vaccinated individuals to strain 2.
\item $\beta_1$ is the transmission coefficient of susceptible individuals to strain 1.
\item $\beta_2$ is the transmission coefficient of susceptible individuals to strain 2.
\item $\displaystyle \frac{1}{\gamma_1}$ is the average infection period of strain 1.
\item $\displaystyle \frac{1}{\gamma_2}$ is the average infection period of strain 2.
\item $v_1$ is the infection-induced death rate of strain 1.
\item $v_2$ is the infection-induced death rate of strain 2.
\end{itemize}
The incidence rate of a disease measures how fast the disease is spreading and it plays
an important role in the research of epidemiology. Rahman and Zou [2] used the bilinear incidence rate $\beta SI$. However, there are more realistic incidence rates than the bilinear incidence rate, For instance, Capasso and his co-workers observed in the seventies [7] that
the incidence rate may increase more slowly as $I$ increases, so they proposed a saturated
incidence rate $ \frac{\beta I S}{1+\zeta I}$.

Baba and Hincal [8] studied an epidemic model consisting of three strains of influenza ($I_1$ , $I_2$, and $I_3$) where we have vaccine for strain1 ($V_1$) only, and  force of infection $\frac{\beta SI} {1 + \zeta S}$ for strain 2. Baba et al. [9] studied an studied an epidemic model consisting of two strains of influenza ($I_1$ and $I_2$) where  force of infection $\frac{\beta SI_2} {1 + \zeta I_2^2}$ for strain 2.

We propose to study model (1) modifying the force of infection in the compartments $I_1$ and $I_2$, by extending the incidence function to a more general form: 
\begin{equation}
F(S, I). \nonumber
\end{equation}
Which is based on the incidence rate studied in [10].

Thus, the resulting model is given by the following system:
\begin{eqnarray}\label{tw2}
 \dot{S}&=&\Lambda- F_1 (S,I_1) -F_2 (S,I_2) -\lambda S \nonumber \\
 \dot{V_1}&=&rS-(\mu + k I_2)V_1\nonumber\\
 \dot{I_1}&=&F_1 (S,I_1)-\alpha_1 I_1 \nonumber \\
 \dot{I_2}&=&F_2 (S,I_2)+kI_2 V_1-\alpha_2 I_2 \nonumber \\
 \dot{R}&=&\gamma_1I_1+\gamma_2 I_2-\mu R.  
\end{eqnarray}
Whose state space is  $\mathbb{R}^5_+= \{ (S, V_1, I_1,I_2, R ) : S \geq 0 , V_1 \geq 0, I_1\geq 0 ,I_2 \geq 0, R\geq 0 \}$ and subject to the initial conditions $S(0) = S_0 \geq 0$ , $V_1(0) = V_{10} \geq 0$ , $I_1(0) = I_{10} \geq 0$, $I_2(0) = I_{20} \geq 0$ and $R(0) = R_0 \geq 0$.

We make the following hypotheses on $F_i$, $i=1,2.$:
\begin{itemize}
\item[H1)]$F_i(S,I_i)=I_if_i(S,I_i)$ with $F_i$, $f_i\in \mathbf{C}^2(\mathbb{R}^2_+ \to \mathbb{R}_+)$ and $F(0,I_i)=F(S,0)$ for all $S,I_i\geq 0.$ 
\item[H2)]$\displaystyle \frac{\partial f_i}{\partial S}(S,I_i)>0$ and $\displaystyle \frac{\partial f_i}{\partial I_i}(S,I_i)\leq 0$ for all $S,I_i\geq 0.$
\item[H3)]$\displaystyle  \lim_{I_i\to 0^+} \frac{F_i(S,I_i)}{I_i}$ exist and is positive for all $S>0.$
\end{itemize}
The first of this hypotheses is a basic requirement for any biologically feasible
incidence rate, since the disease cannot spread when the number of
susceptible or infected individuals is zero.

As for (H2), the condition $\frac{\partial f_i}{\partial S} (S, I_i) > 0$ ensures the monotonicity of $f_i (S, I_i )$ on S, while $\frac{\partial f_i}{\partial I_i} (S, I ) \leq 0$suggests that $\frac{f_i(S, I_i )}{I_i}$ is non-increasing with respect to $I_i$ . In the case when $f_i$ monotonically increases with respect to both variables and is concave with respect to $I_i$ , the hypothesis (H2) naturally holds. Concave incidence functions have been used to represent the saturation effectin the transmission rate when the number of infectives is very high and exposure to thedisease is virtually certain.

(H3) is needed only to ensure that the basic reproduction number is well defined. Some examples of incidence functions studied in the literature that satisfy (H1)–(H3) are as follows:
\begin{itemize}
\item[(C1)]F(S,I)=$\beta SI$ [2]. 
\item[(C2)]F(S,I)=$\frac{\beta SI} {1 + \zeta S}$, where $\zeta>0$ describes the psychological effect of general public towards the infective [8].
\item[(C3)]F(S,I)=$\frac{\beta SI} {1 + \zeta I^2}$, where $\zeta>0$ measures the psychological or inhibitory effect of the population [9].
\end{itemize}
A more thorough list can be found in [10].

This paper is organized as follows. In section 2, we describe the disease dynamics described by the model. In section 3, we calculate the basic reproduction number. In section 4, we establish the existence of equilibrium points. In section 5,  we study the stability of the model. In section 6,  provides some numeric simulations to illustrate our main theoretical results. The paper ends with a some remarks.
\section{Disease dynamics described by the model}
From Model \eqref{tw2}, the total population $N = S+V_1+I_1+I_2+R$ satisfies:
\begin{eqnarray}
\dot{N}&=& \dot{S}+\dot{V_1}+\dot{I_1}+\dot{I_2}+\dot{R} \nonumber \\
&=& \Lambda-\mu S-\mu V_1-\mu I_1-\mu I_2 -\mu R -v_1 I_1-v_2 I_2 \nonumber\\
&\leq & \Lambda-\mu (S+ V_1+ I_1+ I_2 + R ) \nonumber\\
&=& \Lambda-\mu N. \nonumber
\end{eqnarray}
The comparison theorem then implies that $ \displaystyle \lim_{t \to \infty} \sup N(t)\leq \frac{\Lambda}{\mu}$.  Hence N(t) is bounded and so are all components $S(t)$, $V_1(t)$, $I_1(t)$, $I_2(t)$ and $R(t)$.

Since the equation for $\dot{R}$ is actually decoupled from the rest in equation \eqref{tw2}, we only need to consider dynamics of the following four-dimensional sub-system:
\begin{eqnarray} \label{tw3}
\dot{S}&=&\Lambda- F_1 (S,I_1) -F_1 (S,I_2) -\lambda S \nonumber \\
 \dot{V_1}&=&rS-(\mu + k I_2)V_1\nonumber\\
 \dot{I_1}&=&F_1 (S,I_1)-\alpha_1 I_1 \nonumber \\
 \dot{I_2}&=&F_2 (S,I_2)+kI_2 V_1-\alpha_2 I_2. 
\end{eqnarray}
Therefore, we have established the following.
\begin{lem}\label{lemaco}
For model \eqref{tw2}, the closed set 
\begin{equation}
\Omega =\left\lbrace (S,V_1,I_1,I_2)\in \mathbb{R}^4_+ N=S+V_1+I_1+I_2 \leq \frac{\Lambda}{\mu}\right\rbrace \nonumber
\end{equation}  
is  positively invariant.
\end{lem}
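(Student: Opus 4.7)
The plan is to verify positive invariance in two steps: first establish that the non-negative orthant $\mathbb{R}^4_+$ is forward-invariant under the flow of \eqref{tw3}, and then use the differential inequality on $N$ derived just above the statement to confine trajectories to $N \leq \Lambda/\mu$.

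For the first step, I would inspect the vector field on each coordinate hyperplane of $\mathbb{R}^4_+$, using hypothesis (H1) which gives $F_i(0,I_i)=F_i(S,0)=0$. On the face $\{S=0\}$ one obtains $\dot S\big|_{S=0} = \Lambda - F_1(0,I_1)-F_2(0,I_2) = \Lambda \geq 0$; on $\{V_1=0\}$ one has $\dot V_1\big|_{V_1=0} = rS \geq 0$; on $\{I_1=0\}$, $\dot I_1\big|_{I_1=0} = F_1(S,0)=0$, so the hyperplane is actually invariant; similarly $\dot I_2\big|_{I_2=0} = F_2(S,0)+kI_2V_1 = 0$. Since the vector field either points inward or is tangent to every bounding face, a standard tangent-cone argument (or the quasi-positivity criterion for ODE systems) guarantees that solutions starting in $\mathbb{R}^4_+$ remain there for all $t\geq 0$ on their interval of existence.

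For the second step, the computation preceding the lemma already shows $\dot N \leq \Lambda - \mu N$ along any non-negative solution. Applying the standard comparison principle to the scalar linear ODE $\dot u = \Lambda - \mu u$ with $u(0)=N(0)$ yields
\begin{equation*}
N(t) \leq N(0)e^{-\mu t} + \frac{\Lambda}{\mu}\bigl(1 - e^{-\mu t}\bigr).
\end{equation*}
Hence if $N(0) \leq \Lambda/\mu$, then $N(t) \leq \Lambda/\mu$ for all $t\geq 0$. Combining with the first step gives forward invariance of $\Omega$, and the a priori bound also precludes finite-time blow-up, so solutions in $\Omega$ exist for all $t\geq 0$.

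The only mildly delicate point is the treatment of the boundary faces where the vector field merely vanishes rather than pointing strictly inward (namely $\{I_1=0\}$ and $\{I_2=0\}$): here I would invoke the fact that $F_i$ is $\mathbf{C}^2$, so the right-hand side is locally Lipschitz and the invariant face structure is preserved by uniqueness of solutions. Beyond that, the argument is routine and requires no use of hypotheses (H2) or (H3).
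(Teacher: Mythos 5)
Your proof is correct and follows essentially the same route as the paper: the decisive step is the differential inequality $\dot{N}\le \Lambda-\mu N$ combined with the comparison principle, which is exactly the computation the paper carries out immediately before stating the lemma. The only difference is that you additionally verify forward invariance of $\mathbb{R}^4_+$ by inspecting the vector field on the boundary faces using (H1) (with $F_i(S,0)=F_i(0,I_i)=0$ and local Lipschitz continuity of the right-hand side), a non-negativity step the paper leaves implicit, so your version is the more complete one.
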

\section{Basic reproduction number}
The basic reproduction number of infection of model \eqref{tw3}, is a dimensionless quantity denoted by $\mathcal{R}_0$, and intuitively defined as the expected number of secondary infection cases caused by a single typical infective case during its entire period of infectivity in a wholly susceptible population. Then, referring to the method of [11].
\begin{eqnarray}
\mathcal{F}:= \left( \begin{matrix}
F_1(S,I_1)  \\
F_2(S,I_2)+kI_2 V_1
\end{matrix} \right). \nonumber
\end{eqnarray}
\begin{eqnarray}
\mathcal{V}:= \left( \begin{matrix}
\alpha_1 I_1 \\
\alpha_2 I_2
\end{matrix} \right). \nonumber
\end{eqnarray}
Then
\begin{eqnarray}
F'=\left. \left( \begin{matrix}
\frac{\partial F_1(S,I_1)}{\partial I_1} & 0 \\
 0 & \frac{\partial F_2(S,I_2)}{\partial I_2}+k V_1
\end{matrix} \right)\right|_{E_0}
= \left( \begin{matrix}
\frac{\partial F_1(S_0,0)}{\partial I_1} & 0 \\
 0 & \frac{\partial F_2(S_0,0)}{\partial I_2}+\frac{k r \Lambda}{\mu \lambda}
\end{matrix} \right) .\nonumber
\end{eqnarray}
\begin{eqnarray}
V'=\left. \left( \begin{matrix}
\alpha_1  & 0 \\
 0&\alpha_2 
\end{matrix} \right)\right|_{E_0}
= \left( \begin{matrix}
\alpha_1  & 0 \\
 0&\alpha_2 
\end{matrix} \right). \nonumber
\end{eqnarray}

Where $\displaystyle E_0=(S^0,V_{1}^0,0,0)=\left(\frac{\Lambda}{\lambda}, \frac{r \Lambda}{\mu \lambda}, 0,0 \right)$. The matrix F is non-negative and is responsible for new infections, while the V is in\-ver\-ti\-ble and is referred to as the transmission matrix for the model \eqref{tw3}. It follows that,
\begin{eqnarray}
F'V'^{-1}== \left( \begin{matrix}
\frac{\sigma_1}{\alpha_1} & 0 \\
 0 & \frac{\sigma_2}{\alpha_2}+\frac{k r \Lambda}{\alpha_2 \mu \lambda}
\end{matrix} \right). \nonumber
\end{eqnarray}
Where $\sigma_i=\displaystyle \frac{\partial F_i(S_0,0)}{\partial I_i}$, for $i=1,2$. Thus, the basic reproduction number can be calculate as\\
$$\mathcal{R}_0=\rho (F'V'^{-1})=\max \displaystyle \left\lbrace \frac{\sigma_1}{\alpha_1}, \ \frac{\sigma_2}{\alpha_2}+\frac{k r \Lambda}{\alpha_2 \mu \lambda} \right\rbrace.$$
Where $\rho( A )$ denotes the spectral radius of a matrix A. Let
\begin{center}
$\mathcal{R}_1=\frac{\sigma_1}{\alpha_1}$ and $\mathcal{R}_2=\frac{\sigma_2 }{\alpha_2}+\frac{k r \Lambda}{\alpha_2 \mu \lambda}.$ 
\end{center}
Then 
$$\mathcal{R}_0=\max \{\mathcal{R}_1, \mathcal{R}_2 \}.$$
Therefore $\mathcal{R}_1, \mathcal{R}_2\leq \mathcal{R}_0.$
\section{Existence of equilibrium solutions}
The four possible equilibrium points for the system \eqref{tw3} are: Disease-free equilibrium, single-strain $(I_1)$-infection, single-strain $(I_2)$-infection and endemic equilibrium. The system \eqref{tw3} has disease-free equilibrium $E_0 =\left(\frac{\Lambda}{\lambda}, \frac{r \Lambda}{\mu \lambda}, 0,0 \right)$ for all parameter values. We will now prove the existence of  the others equilibrium points. First we will show some lemmas.
\begin{lem}\label{lemFides}
By i=1,2. $$\frac{\partial F_i(S,I_i)}{\partial I_i}=I\frac{\partial f_i(S,I_i)}{\partial I_i}+\frac{F_i(S,I_i)}{I_i}.$$\\
Also:
$$\frac{\partial F_i(S,I_i)}{\partial I_i}\leq \frac{F_i(S,I_i)}{I_i}.$$
\end{lem}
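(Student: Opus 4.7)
The plan is to obtain the identity by straightforward application of the product rule to hypothesis (H1), and then to derive the inequality directly from the sign condition in (H2).

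For the equality, I would start from (H1), which states $F_i(S,I_i) = I_i f_i(S,I_i)$. Treating $S$ as a parameter and differentiating with respect to $I_i$ using the product rule yields
\begin{equation*}
\frac{\partial F_i(S,I_i)}{\partial I_i} = f_i(S,I_i) + I_i\frac{\partial f_i(S,I_i)}{\partial I_i}.
\end{equation*}
Since $F_i(S,I_i)/I_i = f_i(S,I_i)$ for $I_i>0$ (again by (H1)), substituting $f_i = F_i/I_i$ in the first term recovers the claimed identity.

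For the inequality, I would appeal to hypothesis (H2), which guarantees $\partial f_i/\partial I_i \leq 0$. Combined with the nonnegativity $I_i \geq 0$ on the state space, this implies that the term $I_i\,\partial f_i/\partial I_i$ is nonpositive, so dropping it from the equality established in the previous step yields
\begin{equation*}
\frac{\partial F_i(S,I_i)}{\partial I_i} \;\leq\; \frac{F_i(S,I_i)}{I_i},
\end{equation*}
as required.

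There is no substantial obstacle here; the only mild subtlety is that the quotient $F_i/I_i$ is only literally defined for $I_i>0$, but hypothesis (H3) ensures that the relevant limit as $I_i\to 0^+$ exists and equals $f_i(S,0)$, so the identity extends continuously to $I_i=0$. I would note this briefly to justify using the lemma at boundary states that appear later in the stability analysis.
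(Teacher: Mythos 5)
Your proposal is correct and follows essentially the same argument as the paper: apply the product rule to $F_i = I_i f_i$ from (H1), identify $f_i = F_i/I_i$, and drop the nonpositive term $I_i\,\partial f_i/\partial I_i$ using (H2). The remark about the $I_i\to 0^+$ boundary case is a sensible addition the paper omits, but it does not change the substance of the proof.
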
 
\begin{proof}
By H1)
$$F_i(S,I_i)=I_i f_i(S,I_i)$$
Then
\begin{eqnarray}
\frac{\partial F_i(S,I_i)}{\partial I_i}&=&I_i\frac{\partial f_i(S,I_i)}{\partial I_i}+f_i(S,I_i)\nonumber \end{eqnarray}
By H2) $\displaystyle \frac{\partial f_i(S,I_i)}{\partial I_i}\leq 0$, then:
\begin{eqnarray}
\frac{\partial F_i(S,I_i)}{\partial I_i} \leq f_1(S,I_i)=\frac{F_i(S,I_i)}{I_i}.\nonumber
\end{eqnarray}
\end{proof}
\begin{lem}
By model \eqref{tw3}, the closed set  $\Omega_1=\left\lbrace (S,V_1,I_1,I_2)\in \Omega | S \leq S^0 \ \text{and} \  V_1\leq V_1^0 \right\rbrace$ is a positively invariant set.
\end{lem}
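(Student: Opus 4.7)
The plan is to use the fact that $\Omega$ is already positively invariant (Lemma \ref{lemaco}) and only worry about the two extra faces that cut $\Omega$ down to $\Omega_1$: the face $\{S=S^0\}$ and the face $\{V_1=V_1^0\}$. On each face I will check that the vector field of \eqref{tw3} points inward (or is tangent), which by a standard tangential‐vector argument gives positive invariance.

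First I would check the $S$-face. Evaluating $\dot S$ at $S=S^0=\Lambda/\lambda$ gives
\begin{equation*}
\dot S\big|_{S=S^0}=\Lambda-F_1(S^0,I_1)-F_2(S^0,I_2)-\lambda S^0=-F_1(S^0,I_1)-F_2(S^0,I_2)\leq 0,
\end{equation*}
since $F_i\geq 0$ by (H1). So $S$ can never cross the threshold $S^0$ from below to above. Notice that this bound on $\dot S$ does not need any information about $V_1$, so once the trajectory starts with $S(0)\leq S^0$ it remains so for all $t\geq 0$.

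Next I would check the $V_1$-face, using the $S$-bound already obtained. Evaluating $\dot V_1$ at $V_1=V_1^0=r\Lambda/(\mu\lambda)$ and using $S\leq S^0$,
\begin{equation*}
\dot V_1\big|_{V_1=V_1^0}=rS-(\mu+kI_2)V_1^0\leq rS^0-\mu V_1^0-kI_2V_1^0=\frac{r\Lambda}{\lambda}-\frac{r\Lambda}{\lambda}-kI_2V_1^0=-kI_2V_1^0\leq 0,
\end{equation*}
because $I_2,V_1^0,k\geq 0$. Thus $V_1$ also cannot cross $V_1^0$ from below. Combining this with the $S$-bound and with the invariance of $\Omega$ gives the invariance of $\Omega_1$.

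Since each inequality is a one-line computation, no serious obstacle is expected; the only subtlety is the logical order, namely that the bound $S\leq S^0$ must be established before bounding $\dot V_1$, because the right-hand side of $\dot V_1$ involves $S$. I would therefore present the two steps in that order and invoke Lemma \ref{lemaco} at the end to conclude that $(S,V_1,I_1,I_2)$ stays in $\Omega_1$ for all $t\geq 0$.
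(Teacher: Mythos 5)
Your proposal is correct and follows essentially the same route as the paper: first show $\dot S\leq 0$ on the face $S=S^0$ using $\Lambda=\lambda S^0$ and $F_i\geq 0$, then show $\dot V_1\leq 0$ on the face $V_1=V_1^0$ under the already-established bound $S\leq S^0$, and combine with the positive invariance of $\Omega$. The computations and the logical ordering of the two steps match the paper's proof exactly.
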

\begin{proof}
As $\Omega$ is a positively invariant set for model (4), it will be enough to show that if $S=S^0$, then $\dot{S}\leq 0$ and if $S\leq S^0$ and  $V_1=V_1^0$, then $\dot{V_1}\leq 0.$\\ 
If $S=S^0$, then
\begin{eqnarray}
\dot{S}&=&\Lambda- F_1 (S^0,I_1) -F_1 (S^0,I_2) -\lambda S^0\nonumber\\
&=&\lambda S^0- F_1 (S^0,I_1) -F_1 (S^0,I_2) -\lambda S^0\nonumber\\
&=&- F_1 (S^0,I_1) -F_1 (S^0,I_2)\leq 0 \nonumber
\end{eqnarray}
If $S\leq S^0$ and $V_1=V_1^0$, Then
\begin{eqnarray}
 \dot{V_1}&\leq& rS^0-(\mu + k I_2)V_1^0\nonumber\\
 &=& rS^0-\mu V_1^0 - k I_2 V_1^0=- k I_2 V_1^0\leq 0 \nonumber
\end{eqnarray}
\end{proof}
\begin{lem}\label{lemfs}
By i=1,2. $$\frac{\partial F_i(S,I_i)}{\partial S}\geq0$$
\end{lem}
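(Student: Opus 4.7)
The plan is to reduce the statement directly to hypothesis (H1) together with the monotonicity part of (H2), using the fact that the state space $\mathbb{R}^4_+$ forces $I_i \geq 0$.

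First I would start from the factorization $F_i(S,I_i) = I_i f_i(S,I_i)$ provided by (H1), and differentiate with respect to $S$. Since the factor $I_i$ does not depend on $S$, the product rule collapses to $\frac{\partial F_i}{\partial S}(S,I_i) = I_i\,\frac{\partial f_i}{\partial S}(S,I_i)$. This is the only real computation needed.

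Then I would invoke (H2), which guarantees $\frac{\partial f_i}{\partial S}(S,I_i) > 0$ for every $S,I_i \geq 0$, and combine this with $I_i \geq 0$ (which holds throughout the biologically meaningful domain $\Omega$, and in particular for all points of $\mathbb{R}^2_+$ on which $F_i$ is defined by (H1)). The product of a nonnegative number and a strictly positive number is nonnegative, giving the desired inequality $\frac{\partial F_i}{\partial S}(S,I_i) \geq 0$.

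There is no substantive obstacle here: the only thing to be a bit careful about is that (H2) gives strict positivity of $\partial_S f_i$ but the lemma only claims $\partial_S F_i \geq 0$, because the factor $I_i$ can vanish. So the inequality is weak rather than strict, and this is consistent with (and necessary for) $F_i(S,0)=0$ in (H1). The argument applies uniformly for $i=1,2$, so one proof handles both cases.
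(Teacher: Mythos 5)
Your proof is correct and is essentially identical to the paper's: both apply the product rule to the factorization $F_i(S,I_i)=I_i f_i(S,I_i)$ from (H1) and then use $\frac{\partial f_i}{\partial S}>0$ from (H2) together with $I_i\geq 0$. Your added remark explaining why the conclusion is only a weak inequality (the factor $I_i$ may vanish) is a nice clarification the paper omits.
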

\begin{proof} 
By H1)
$$F_i(S,I_i)=I_i f_i(S,I_i)$$
Then
\begin{eqnarray}
\frac{\partial F_i(S,I_i)}{\partial S}&=&I_i\frac{\partial f_i(S,I_i)}{\partial S}\geq 0 \ \text{By H2)}.\nonumber 
\end{eqnarray}
\end{proof}
\begin{rem}\label{remtw1}
By H2) given $a$ and $b$, for all $S$ and $I_i$ if  $S \leq a$,  $I_i \geq b$, then 
$f_i(S,I_i)\leq f_i(a,b)$, $i=1,2$.
\end{rem}
\begin{teo}\label{teoext}
\begin{itemize}
\item[1)]The model \eqref{tw3} admits a unique single-strain $(I_1)$-infection equilibrium $E_1=(\bar{S},\bar{V_1},\bar{I_1},0)$ if and only if $\mathcal{R}_1>1$. 
\item[2)]The model \eqref{tw3} admits a single-strain $(I_2)$-infection equilibrium $E_2=(\tilde{S},\tilde{V_1},0,\tilde{I_2})$ if and only if $\mathcal{R}_2>1$. Also, if $-\alpha_2 r \mu-\alpha_2 \mu^2+k\Lambda r<0$ then $E_2$ is unique. While if  $-\alpha_2 r \mu-\alpha_2 \mu^2+k\Lambda r>0$ then the model (3) has at most one single-strain $(I_2)$-infection in the interval $\left[\frac{-r\alpha_2-\alpha_2\mu +\sqrt{r\alpha_2(r\alpha_2+\alpha_2 \mu+k\Lambda)}}{\alpha_2k} ,\frac{\Lambda}{\alpha_2}\right]$. 
\end{itemize}
\end{teo}
\begin{proof}
\begin{itemize}
\item[1)] 
If $I_2=0$ and $\mathcal{R}_1>1$, we consider the system 
\begin{eqnarray}
&&\Lambda-F_1(\bar{S},\bar{I_1})-\lambda \bar{S}=0 \label{tw4}\\
&&r \bar{S}-\mu \bar{V_1} =0  \label{tw5}\\
&&F_1(\bar{S},\bar{I_1})-\alpha_1 \bar{I_1}=0  \label{tw6}. 
\end{eqnarray} 
By \eqref{tw5} and \eqref{tw6} 
\begin{equation}
\bar{V_1}=\frac{r \bar{S}}{\mu},\ F_1(\bar{S},\bar{I_1})=\alpha_1 \bar{I_1}. \nonumber 
\end{equation}
Substituting in \eqref{tw4}.
\begin{eqnarray}
\Lambda-\alpha_1 \bar{I_1}-\lambda \bar{S}=0\nonumber \\
\bar{S}=\frac{\Lambda-\alpha_1 \bar{I_1}}{\lambda}. \nonumber
\end{eqnarray}
Note that $S\geq 0$ if and only if $\bar{I_1}\leq \frac{\Lambda}{\alpha_1}$. $\bar{I_1}$ being determined by the positive roots of the equation.
\begin{equation}
G(\bar{I_1})\equiv F_1(\frac{\Lambda-\alpha_1 \bar{I_1}}{\lambda},\bar{I_1})-\alpha_1 \bar{I_1}. \label{tw7} 
\end{equation}
See that
\begin{equation}
G'(\bar{I_1})=\frac{-\alpha_1}{\lambda} \frac{\partial F_1(\frac{\Lambda-\alpha_1 \bar{I_1}}{\lambda},\bar{I_1})}{\partial S}+\frac{\partial F_1(\frac{\Lambda-\alpha_1 \bar{I_1}}{\lambda},\bar{I_1})}{\partial I_1}-\alpha_1.  \nonumber
\end{equation}
Then 
\begin{equation}
G(0)=F_1(\frac{\Lambda}{\lambda},0)=0\ \text{by H1}. \nonumber
\end{equation}
\begin{eqnarray}
G'(0)&=&\frac{-\alpha_1}{\lambda} \frac{\partial F_1(\frac{\Lambda}{\lambda},0)}{\partial S}+\frac{\partial F_1(\frac{\Lambda}{\lambda},0)}{\partial I_1}-\alpha_1  \nonumber\\
&=&\frac{\partial F_1(S_0,0)}{\partial I_1}-\alpha_1 \nonumber \ \text{by H1}\\
&=&\alpha_1 \left(\frac{ \sigma_1}{\alpha_1}-1\right)=\alpha_1\left(\mathcal{R}_1-1\right)>0. \nonumber
\end{eqnarray}
Therefore $G(\bar{I_1})>0$ by $I_1$ sufficiently small. Also
\begin{equation}
G(\frac{\Lambda}{\alpha_1})=F_1(0,\bar{I_1})-\Lambda=-\Lambda<0. \nonumber
\end{equation}
then equation \eqref{tw7} has a positive root.

Also if $ E_1 $ exists then
\begin{eqnarray}
f_1(\bar{S},\bar{I_1})-\alpha_1 =0.\nonumber
\end{eqnarray}
Note that $\bar{S}< S^0$. Then by lemma \ref{lemFides} and remark \ref{remtw1}
\begin{eqnarray}
0 &< & f_1(S^0,0)-\alpha_1\nonumber\\
&=&\frac{\partial F_1(S^0,0)}{\partial I_1}-\alpha_1\nonumber\\  
&=&\alpha_1 \left(\mathcal{R}_1-1\right).\nonumber  
\end{eqnarray}
Then $\mathcal{R}_1>1.$

Next, we shall show that $\bar{I_1}$ is unique. From \eqref{tw6}, it follows that
\begin{eqnarray}
\alpha_1=f_1(\bar{S},\bar{I_1}) \nonumber
\end{eqnarray}
Using (H2) and lemma \ref{lemFides}, we have that $\frac{-\alpha_1}{\lambda} \frac{\partial F_1(\bar{S},\bar{I_1})}{\partial S}\leq 0$ and $\bar{I_1}\frac{\partial f_1(\bar{S},\bar{I_1})}{\partial I_1}<0$. Furthermore, it can be found that
\begin{eqnarray}
G'(\bar{I_1})&=&\frac{-\alpha_1}{\lambda} \frac{\partial F_1(\frac{\Lambda-\alpha_1 \bar{I_1}}{\lambda},\bar{I_1})}{\partial S}+\frac{\partial F_1(\frac{\Lambda-\alpha_1 \bar{I_1}}{\lambda},\bar{I_1})}{\partial I_1}-\alpha_1.  \nonumber\\
&=&\frac{-\alpha_1}{\lambda} \frac{\partial F_1(\frac{\Lambda-\alpha_1 \bar{I_1}}{\lambda},\bar{I_1})}{\partial S}+\bar{I_1}\frac{\partial \bar{I_1}(\bar{S},\bar{I_1})}{\partial \bar{I_1}}+f_1(\bar{S},\bar{I_1})-f_1(\bar{I_1},\bar{I_1})\nonumber\\
&=&\frac{-\alpha_1}{\lambda} \frac{\partial F_1(\frac{\Lambda-\alpha_1 \bar{I_1}}{\lambda},\bar{I_1})}{\partial S}+\bar{I_1}\frac{\partial \bar{I_1}(\bar{S},\bar{I_1})}{\partial \bar{I_1}}<0.\nonumber
\end{eqnarray}
Which implies that $G(\bar{I_1})$ strictly decreases at any of the zero points of \eqref{tw7}. Let us
suppose that \eqref{tw7} has more than one positive root. Without loss of generality, we
choose the one, denoted by $\bar{I_1}^*$, that is the nearest to $\bar{I_1}$. Because of the continuity of $G(\bar{I_1} )$, we must have $G'(\bar{I_1}^*)\geq 0$, which results in a contraction with the strictly decreasing property of $G(\bar{I_1})$ at all the zero points. 
\item[2)]If $I_1=0$ and $\mathcal{R}_2>1$, we consider the system
\begin{eqnarray}
\Lambda-F_2(\tilde{S},\tilde{I_2})-\lambda \tilde{S}=0 \label{tw8}\\
r \tilde{S}-(\mu+k\tilde{I_2}) \tilde{V_1} =0\label{tw9}\\
F_2(\tilde{S},\tilde{I_2})+k\tilde{I_2}\tilde{V_1}-\alpha_2 \tilde{I_2}=0.\label{tw10} 
\end{eqnarray} 
By \eqref{tw9} and \eqref{tw10}
\begin{equation}
\tilde{V_1}=\frac{r \tilde{S}}{\mu+k\tilde{I_2}},\ F_2(\tilde{S},\tilde{I_2})=-k\tilde{I_2}\tilde{V_1}+\alpha_2 \tilde{I_2}. \nonumber 
\end{equation}
Substituting in \eqref{tw8}.
\begin{eqnarray}
\Lambda-\alpha_2 \tilde{I_2}+k\tilde{I_2}\tilde{V_1}-\lambda \tilde{S}=0\nonumber \\
\left(\lambda-\frac{ kr \tilde{I_2}}{\mu +k \tilde{I_2}}  \right)\tilde{S}=\Lambda-\alpha_2 \tilde{I_2}.\nonumber 
\end{eqnarray}
\begin{eqnarray}
\left(\frac{\lambda(\mu +k \tilde{I_2}) -kr \tilde{I_2}}{\mu +k \tilde{I_2}}  \right)\tilde{S}=\Lambda-\alpha_2 \tilde{I_2}\nonumber \\
\left(\frac{\lambda \mu +(\mu+r)k \tilde{I_2} -kr \tilde{I_2}}{\mu +k \tilde{I_2}}  \right)\tilde{S}=\Lambda-\alpha_2 \tilde{I_2}\nonumber \\
\tilde{S}=\left(\Lambda-\alpha_2 \tilde{I_2}\right)
\left(\frac{\mu +k \tilde{I_2}}{\lambda \mu +\mu k \tilde{I_2}}  \right).
\nonumber
\end{eqnarray}
Note that $\tilde{S}\geq 0$ if and only if $\tilde{I_2}\leq \frac{\Lambda}{\alpha_2}$. $\tilde{I_2}$ being determined by the positive roots of the equation.
\begin{eqnarray}
H(\tilde{I_2})&\equiv & F_2\left(\frac{(\Lambda-\alpha_2 \tilde{I_2})(\mu +k \tilde{I_2})}{\lambda \mu +k \mu \tilde{I_2}},\bar{I_2}\right)+k\tilde{I_2}\tilde{V_1}-\alpha_2 \tilde{I_2}\nonumber \\
&=&F_2\left(\frac{\Lambda \mu +(\Lambda k -\alpha_2 \mu ) \tilde{I_2} -k \alpha_2 \tilde{I_2}^2}{\lambda \mu +k \mu \tilde{I_2}},\tilde{I_2}\right)\nonumber\\
&&+\left(\frac{ \Lambda rk\tilde{I_2} -\alpha_2 r k \tilde{I_2}^2}{\lambda \mu +k\mu \tilde{I_2}}\right)-\alpha_2 \bar{I_2}. \label{tw11} 
\end{eqnarray}
See that 
\begin{eqnarray}
H'(\tilde{I_2})&=&\frac{(k \mu)(-k \alpha_2 \tilde{I_2}^2-\Lambda\mu)+ \lambda\mu(\Lambda k -\alpha_2 \mu-2k \alpha_2 \tilde{I_2})}{\left(\lambda \mu +\mu k \tilde{I_2} \right)^2} \nonumber\\
&&\times   \frac{\partial F_2\left(\frac{\Lambda \mu +(\Lambda k -\alpha_2 \mu ) \tilde{I_2} -k \alpha_2 \tilde{I_2}^2}{\lambda \mu +\mu k \tilde{I_2}},\tilde{I_2}\right)}{\partial S}\nonumber\\
&&+\frac{\partial F_2\left(\frac{\Lambda \mu +(\Lambda k -\alpha_2 \mu ) \tilde{I_2} -k \alpha_2 \tilde{I_2}^2}{\lambda \mu +\mu k \tilde{I_2}},\tilde{I_2}\right)}{\partial I_1}\nonumber\\
&&+\left(\frac{ \lambda\mu(\Lambda rk-\alpha_2 r k \tilde{I_2}) -(k \mu)\alpha_2 r k \tilde{I_2}^2}{(\lambda \mu +\mu k \tilde{I_2})^2}\right)-\alpha_2.  \nonumber
\end{eqnarray}
Then 
\begin{equation}
H(0)=F_2\left(\frac{\Lambda}{\lambda},0\right)=0\ \text{by H1}. \nonumber
\end{equation}
\begin{eqnarray}
H'(0)&=&\frac{\partial F_2(S_0,0)}{\partial I_1}+\frac{\Lambda r k}{\lambda \mu}-\alpha_2 \nonumber \ \text{by H1}\\
&=&\alpha_2 \left(\frac{ \beta_2}{\alpha_2}+\frac{\Lambda r k}{\alpha_2 \lambda \mu}-1\right)=\alpha_2 \left( \mathcal{R}_2-1\right)>0. \nonumber
\end{eqnarray}
Therefore $H(\bar{I_2})>0$ by $I_2$ sufficiently small. Also
\begin{equation}
H\left(\frac{\Lambda}{\alpha_2}\right)=F_2\left(0,\frac{\Lambda}{\alpha_2} \right)-\Lambda=-\Lambda<0. \nonumber
\end{equation}
then equation \eqref{tw11} has a positive root.

Also if $ E_2 $ exists then
\begin{eqnarray}
f_2(\tilde{S},\tilde{I_2})+k\tilde{V_1}-\alpha_2 =0.\nonumber
\end{eqnarray}
Note that $\tilde{S}< S^0$ and $\tilde{V_1}< V_1^0$. Then by lemma \ref{lemFides} and remark \ref{remtw1}
\begin{eqnarray}
0 &< & f_2(S^0,0)+kV_1^0-\alpha_2\nonumber\\
&=&\frac{\partial F_2(S^0,0)}{\partial I_2}+kV_1^0-\alpha_2\nonumber\\  
&=&\alpha_2\left(\mathcal{R}_2-1\right).\nonumber  
\end{eqnarray}
Then $\mathcal{R}_2>1.$

Next, we shall show that $\tilde{I_2}$ is unique if  $-\alpha_2 r \mu-\alpha_2 \mu^2+k\Lambda r<0$ and if  $-\alpha_2 r \mu-\alpha_2 \mu^2+k\Lambda r>0$ then the model (3) has at most one single-strain $(I_2)$-infection in the interval $\left[\frac{-r\alpha_2-\alpha_2\mu +\sqrt{r\alpha_2(r\alpha_2+\alpha_2 \mu+k\Lambda)}}{\alpha_2k} ,\frac{\Lambda}{\alpha_2}\right]$.

From \eqref{tw10}, it follows that
\begin{eqnarray}
\alpha_2-k\tilde{V_1}=f_2(\tilde{S},\tilde{I_2}). \nonumber
\end{eqnarray}
Furthermore, it can be found that
\begin{eqnarray}
H'(\tilde{I_2})&=&\frac{-\alpha_2 r \mu-\alpha_2 \mu^2-2\alpha_2 \mu k\tilde{I_2}-2 \alpha_2 k r \tilde{I_2} -\alpha_2 k^2 \tilde{I_2}^2+k \Lambda r}{\mu(\lambda + k \tilde{I_2})^2}\nonumber\\
&&\times \frac{\partial F_2(\tilde{S},\tilde{I_2})}{\partial S}+\frac{\partial F_2(\tilde{S},\tilde{I_2})}{\partial I_2}+k\tilde{V_1}-\frac{r(\alpha_2 \lambda+k \Lambda)\tilde{I_2}}{\mu(\lambda+k \tilde{I_2})^2}-\alpha_2  \nonumber\\
&=&\frac{-\alpha_2 r \mu-\alpha_2 \mu^2-2\alpha_2 \mu k\tilde{I_2}-2 \alpha_2 k r \tilde{I_2} -\alpha_2 k^2 \tilde{I_2}^2+k \Lambda r}{\mu(\lambda + k \tilde{I_2})^2}\nonumber\\
&&\times \frac{\partial F_2(\tilde{S},\tilde{I_2})}{\partial S}+\tilde{I_2}\frac{\partial f_2(\tilde{S},\tilde{I_2})}{\partial I_2}-\frac{r(\alpha_2 \lambda+k \Lambda)\tilde{I_2}}{\mu(\lambda+k \tilde{I_2})^2}  \nonumber.
\end{eqnarray}
If  $-\alpha_2 r \mu-\alpha_2 \mu^2+k\Lambda r<0$, then $H'(\tilde{I_2})<0$  which implies that $H(\tilde{I_2})$ strictly decreases at any of the zero points of \eqref{tw11}. Let us
suppose that \eqref{tw11} has more than one positive root. Without loss of generality, we
choose the one, denoted by $\tilde{I_2}^*$, that is the nearest to $\tilde{I_2}$. Because of the continuity of $H(\tilde{I_2})$, we must have $H'(\tilde{I_2}^*)\geq 0$, which results in a contraction with the strictly decreasing property of $H(\tilde{I_2})$ at all the zero points.

If  $-\alpha_2 r \mu-\alpha_2 \mu^2+k\Lambda r>0$, Let us suppose that \eqref{tw11} has more than one positive root in $\left[\frac{-r\alpha_2-\alpha_2\mu +\sqrt{r\alpha_2(r\alpha_2+\alpha_2 \mu+k\Lambda)}}{\alpha_2k} ,\frac{\Lambda}{\alpha_2}\right]$.  Without loss of generality, we choose the one, denoted by $\tilde{I_2}^*$, that is the nearest to $\tilde{I_2}$. Note that $H'(\tilde{I_2}^*)<0$ and $H'(\tilde{I_2}^*)<0$. Because of the continuity of $H(\tilde{I_2})$, we must have $H'(\tilde{I_2}^*)\geq 0$, which results in a contraction.    
\end{itemize}
\end{proof}
\begin{teo}\label{teoext2}
If $\displaystyle \frac{\partial F_2(S,I_2)}{\partial S}\leq I_2$, $\forall S,I_2$. Then the model \eqref{tw3} admits a unique single-strain $(I_2)$-infection equilibrium $E_2=(\tilde{S},\tilde{V_1},0,\tilde{I_2})$ if and only if $\mathcal{R}_2>1$.
\end{teo}
\begin{proof}
Similar argument to the proof of Theorem \ref{teoext} proof that the model \eqref{tw3} admits a single-strain $(I_2)$-infection equilibrium $E_2=(\tilde{S},\tilde{V_1},0,\tilde{I_2})$ if and only $\mathcal{R}_2>1$. Also If  $\displaystyle \frac{\partial F_2(S,I_2)}{\partial S}\leq I_2$, then

\begin{eqnarray}
H'(\tilde{I_2})&\leq& \frac{-\alpha_2 r \mu-\alpha_2 \mu^2-2\alpha_2 \mu k\tilde{I_2}-2 \alpha_2 k r \tilde{I_2} -\alpha_2 k^2 \tilde{I_2}^2}{\mu(\lambda + k I_2)^2}\nonumber\\
&&\times \frac{\partial F_2(\tilde{S},\tilde{I_2})}{\partial S}+\tilde{I_2}\frac{\partial f_2(\tilde{S},\tilde{I_2})}{\partial I_2}-\frac{r(\alpha_2 \lambda)\tilde{I_2}}{\mu(\lambda+k \tilde{I_2})^2}  \nonumber.
\end{eqnarray}
Then $H'(\tilde{I_2})<0$  which implies that $H(\tilde{I_2})$ strictly decreases at any of the zero points of \eqref{tw11}. Let us
suppose that \eqref{tw11} has more than one positive root. Without loss of generality, we
choose the one, denoted by $\tilde{I_2}^*$, that is the nearest to $\tilde{I_2}$. Because of the continuity of $H(\tilde{I_2})$, we must have $H'(\tilde{I_2}^*)\geq 0$, which results in a contraction with the strictly decreasing property of $H(\tilde{I_2})$ at all the zero points.
\end{proof}
\begin{rem}\label{remtw2}
Some examples of incidence functions that satisfy $\displaystyle \frac{\partial F_2(S,I_2)}{\partial S}\leq I_2$ are (C1), (C2) and (C3) when $\beta\leq 1$.
\end{rem}
The model \eqref{tw3} can have endemic infection equilibrium $E_3 =(S^*,V_1^*,I_1^*,I_2^*)$. To find $E_3$, we consider the system
\begin{eqnarray}
\Lambda-F_1(S^*,I_1^*)-F_2(S^*,I_2^*)-\lambda S^*=0 \label{tw12}\\
r S^*-(\mu+k I_2^*) V_1^* =0 \label{tw13}\\
F_1(S^*,I_1^*)-\alpha_1 I_1^*=0 \label{tw14}\\
F_2(S^*,I_2^*)+k I_2^* V_1^*-\alpha I_2^*=0. \label{tw15}
\end{eqnarray} 
By \eqref{tw13}, \eqref{tw14} and \eqref{tw15}
\begin{equation}
V_1^*=\frac{r S^*}{\mu+k I_2^*},\ F_1(S^*,I_1^*)=\alpha_1 I_1^*,\ F_2( S^*, I_2^* )=-k I_2^* V_1^*+\alpha_2 I_2^*. \nonumber 
\end{equation}
Substituting in \eqref{tw12}.
\begin{eqnarray}
&&\Lambda-\alpha_1 I_1^*-\alpha_2 I_2^* +k I_2^* V_1^*-\lambda  S^*=0\nonumber \\
&&\left(\lambda-\frac{ kr  I_2^*}{\mu +k  I_2^*}  \right)S^*=\Lambda-\alpha_1 I_1^*-\alpha_2 I_2^*\nonumber\\
&&\left(\frac{\lambda \mu +(\mu+r)k  I_2^* -kr  I_2^*}{\mu +k  I_2^*}  \right) S^*=\Lambda-\alpha_2  I_2^*\nonumber \\
&&S^*=\left(\Lambda-\alpha_1 I_1^*-\alpha_2  I_2^* \right)
\left(\frac{\mu +k I_2^*}{\lambda \mu +\mu k I_2^*}  \right).
\nonumber
\end{eqnarray}
Note that $S^* \geq 0$ if and only if $I_1^* \leq \frac{\Lambda-\alpha_2 I_2^*}{\alpha_1}$ and $I_2^* \leq \frac{\Lambda-\alpha_1 I_1^*}{\alpha_2}$. $\bar{I_2}$ being determined by the positive roots of the equation.
%%%
\begin{eqnarray}
G_2(I_2^*)&\equiv & f_2\left(\frac{(\Lambda-\alpha_1 I_1^*-\alpha_2 I_2^*)(\mu +k I_2^*)}{\lambda \mu +k \mu I_2^*}, I_2^* \right)+k V_1^*-\alpha_2. \nonumber 
\end{eqnarray}

$I_1^*$ being determined by the positive roots of the equation.
\begin{equation}
G_1({I_1^*})\equiv f_1\left(\frac{(\Lambda-\alpha_1 I_1^*-\alpha_2 I_2^*)(\mu +k I_2^*)}{\lambda \mu +k \mu I_2^*},{I_1^*}\right)-\alpha_1.  \nonumber
\end{equation}
\section{Stability of equilibrium}
In this section we will study the local and global stability of the equilibrium points. 
\begin{teo}\label{teole0}
The disease-free equilibrium $E_0=\displaystyle \left(\frac{\Lambda}{\lambda}, \frac{r \Lambda}{\mu \lambda}, 0,0 \right)$ is unstable if $\mathcal{R}_0 > 1$ while it is locally asymptotically stable if $\mathcal{R}_0 < 1$.
\end{teo}
\begin{proof}
The Jacobian matrix of the model, we get as follows:
\begin{eqnarray}\label{tw16}
J:= \left( \begin{matrix}
-\frac{\partial F_1}{\partial S} - \frac{\partial F_2}{\partial S} -\lambda  & 0 &-\frac{\partial F_1}{\partial I_1} &-\frac{\partial F_2}{\partial I_2} \\
r & -\mu-k I_2&0& -k V_1 \\
\frac{\partial F_1}{\partial S}& 0 & \frac{\partial F_1}{\partial I_1}-\alpha_1 & 0\\
\frac{\partial F_2}{\partial S} & k I_2 & 0&\frac{\partial F_2}{\partial I_2}+k V_1-\alpha_2  
\end{matrix} \right).
\end{eqnarray}
Then Eq. \eqref{tw16} at the disease-free equilibrium $E_0$  is
\begin{eqnarray}
J_{E_0}&=& \left( \begin{matrix}
-\frac{\partial F_1(S^0,0)}{\partial S} - \frac{\partial F_2(S^0,0)}{\partial S} -\lambda  & 0 &-\frac{\partial F_1(S^0,0)}{\partial I_1} &-\frac{\partial F_2(S^0,0)}{\partial I_2} \\
r & -\mu &0& -k V_1^0 \\
\frac{\partial F_1(S^0,0)}{\partial S}& 0 & \frac{\partial F_1(S^0,0)}{\partial I_1}-\alpha_1 & 0\\
\frac{\partial F_2(S^0,0)}{\partial S} & 0 & 0&\frac{\partial F_2(S^0,0)}{\partial I_2}+k V_1^0-\alpha_2  
\end{matrix} \right) \nonumber \\
&=& \left( \begin{matrix}
-\lambda  & 0 &-\frac{\partial F_2(S^0,0)}{\partial I_1}  &-\frac{\partial F_2(S^0,0)}{\partial I_2} \\
r & -\mu&0& -k V_1^0 \\
0& 0 & \frac{\partial F_1(S^0,0)}{\partial I_1} -\alpha_1 & 0\\
0& 0 & 0 & \frac{\partial F_2(S^0,0)}{\partial I_2}+\frac{k r \Lambda}{\mu \lambda}-\alpha_2 
\end{matrix} \right)\nonumber\\
&=& \left( \begin{matrix}
-\lambda  & 0 &-\sigma  &-\sigma_2 \\
r & -\mu&0& -k V_1^0 \\
0& 0 & \alpha_1\left(\frac{\sigma_1}{ \alpha_1} -1\right) & 0\\
0& 0 & 0 & \alpha_2 \left(\frac{\sigma_2 }{ \alpha_2}+\frac{k r \Lambda}{\mu \lambda \alpha_2}-1\right) 
\end{matrix} \right) \nonumber\\
&=& \left( \begin{matrix}
-\lambda  & 0 &-\sigma_1  &-\sigma_2 \\
r & -\mu&0& -k V_1 \\
0& 0 & \alpha_1\left(\mathcal{R}_1 -1\right) & 0\\
0& 0 & 0 & \alpha_2 \left(\mathcal{R}_2-1\right) 
\end{matrix} \right)\label{tw17}.
\end{eqnarray}
Thus the eigenvalues of the above Eq. \eqref{tw17} are
\begin{equation}
\lambda_1=-\lambda, \ \lambda_2=-\mu,\  \lambda_3=\alpha_1(\mathcal{R}_1 -1), \ \lambda_4=\alpha_2(\mathcal{R}_2 -1). \label{tw18}  
\end{equation}
From \eqref{tw18}, if $\mathcal{R}_0 < 1$, then $\lambda_3,\lambda_4 < 0$ and we obtain that the disease-free equilibrium $E^0$ of Model \eqref{tw3} is locally asymptotically stable. If $\mathcal{R}_0 > 1$, then the disease-free equilibrium loses its stability.
\end{proof}
\begin{teo}\label{teole1}
Let $\bar{\mathcal{R}_2}=\frac{1}{\alpha_2}\frac{\partial F_2(\bar{S},0)}{\partial I_2}+\frac{k \bar{V_1}}{\alpha_2}$. The equilibrium $E_1$ is unstable if $\bar{\mathcal{R}_2} > 1$ while it is locally asymptotically stable if $ 1<\bar{\mathcal{R}_2}$.
\end{teo}
\begin{proof}
Then Eq. \eqref{tw16} at the equilibrium $E_1$  is
\begin{eqnarray}
J_{E_1}&=& \left( \begin{matrix}
A_{11}   & 0 &A_{13} &A_{14} \\
r & -\mu &0& A_{24} \\
A_{31}& 0 & A_{33} & 0\\
0 & 0 & 0&A_{44}  
\end{matrix} \right). \label{tw19}
\end{eqnarray}
Where
\begin{eqnarray}
A_{11}&=&-\frac{\partial F_1(\bar{S},\bar{I_1})}{\partial S} -\lambda <0 \nonumber \\
A_{13}&=&-\frac{\partial F_1(\bar{S},\bar{I_1})}{\partial I_1}<0 \nonumber\\
A_{14}&=&-\frac{\partial F_2(\bar{S},0)}{\partial I_2} \nonumber\\
A_{24}&=&-k \bar{V_1}<0\nonumber\\
A_{31}&=&\frac{\partial F_1(\bar{S},\bar{I_1})}{\partial S}>0 \nonumber\\
A_{33}&=&\frac{\partial F_1(\bar{S},\bar{I_1})}{\partial I_1}-\alpha_1= \bar{I_1}\frac{\partial f_1(\bar{S},\bar{I_1})}{\partial {I_1}}+f_1(\bar{S},\bar{I_1})-\alpha_1= \bar{I_1}\frac{\partial f_1(\bar{S},\bar{I_1})}{\partial {I_1}}\leq 0 \nonumber\\
A_{44}&=&\frac{\partial F_2(\bar{S},0)}{\partial I_2}+k \bar{V_1}-\alpha_2=\alpha(\bar{\mathcal{R}_2}-1). \nonumber
\end{eqnarray}
The last equality regarding $ A_{33} $ is that equation \eqref{tw6} implies that $f_1(\bar{S},\bar{I_1})-\alpha_1=0$.
The corresponding characteristic polynomial is
$$p(x)=-(A_{44}-x)(x^3+a_2 x^2+a_1 x+ a_0).$$
Then has an eigenvalue is $A_{44}$ and the remaining ones
satisfy
$$(x^3+a_2 x^2+a_1 x+ a_0)=0.$$
Where
\begin{eqnarray}
a_2&=&-(A_{11}-\mu+A_{33})>0 \nonumber \\
a_1&=&-\mu A_{11}-\mu A_{33}+ A_{11} A_{33}-A_{13} A_{31} \nonumber\\
a_0&=&\mu A_{11} A_{33}-\mu A_{13}A_{31}. \nonumber
\end{eqnarray}
Note that
\begin{eqnarray}
A_{11}A_{33}-A_{13}A_{31}&=&\left(-\frac{\partial F_1(\bar{S},\bar{I_1})}{\partial S} -\lambda \right)\left(\frac{\partial F_1(\bar{S},\bar{I_1})}{\partial I_1}-\alpha_1\right)+\frac{\partial F_1(\bar{S},\bar{I_1})}{\partial I_1}\frac{\partial F_1(\bar{S},\bar{I_1})}{\partial S}\nonumber\\
&=&-\lambda \left(\frac{\partial F_1(\bar{S},\bar{I_1})}{\partial I_1}-\alpha_1\right)+\alpha_1\frac{\partial F_1(\bar{S},\bar{I_1})}{\partial S}>0. \nonumber
\end{eqnarray}
Then $a_1$,$a_0>0$ and
\begin{eqnarray}
a_2a_1-a_0&=&-\left(A_{11}+A_{33}\right)a_1+\mu \left(-\mu A_{11}-\mu A_{33}\right)+\mu \left( A_{11} A_{33}-A_{13} A_{31}\right)-a_0\nonumber\\
&=&-\left(A_{11}+A_{33}\right)a_1+\mu \left(-\mu A_{11}-\mu A_{33}\right)>0.\nonumber
\end{eqnarray}
Applying the Routh–Hurwitz criterion, we see that all roots of $x^3+a_2 x^2+a_1 x+ a_0$ have
negative real parts. If $\bar{\mathcal{R}_2} > 1$, then $A_{44}>0$ therefore $E_1$ is unstable and if $\bar{\mathcal{R}_2} < 1$, then $A_{44}<0$ therefore $E_1$ is stable.
\end{proof}
\begin{rem}\label{remtw3}
$\bar{S}\leq S^0$ and $\bar{V_1}\leq V_1^0$, then $\bar{\mathcal{R}_2}\leq \mathcal{R}_2$, therefore if $\mathcal{R}_2< 1$ then $\bar{\mathcal{R}_2}<1$.
\end{rem}
\begin{teo}\label{teole2}
Let $\tilde{\mathcal{R}_1}=\frac{1}{\alpha_1}\frac{\partial F_1(\tilde{S},0)}{\partial I_1}$. If $\frac{\partial F_2(\tilde{S},\tilde{I_2})}{\partial I_2}\leq 0$ the equilibrium $E_2$ is unstable if $\bar{\mathcal{R}_1} > 1$ while it is locally asymptotically stable if $ 1<\bar{\mathcal{R}_1}$.
\end{teo}
\begin{proof}
Then Eq. \eqref{tw16} at the equilibrium $E_1$  is
\begin{eqnarray}
J_{E_2}&=& \left( \begin{matrix}
B_{11}   & 0 &B_{13} &B_{14} \\
r &B_{22} &0& B_{24} \\
0& 0 & B_{33} & 0\\
B_{41} & B_{42} & 0&B_{44}  \label{tw20}
\end{matrix} \right).
\end{eqnarray}
Where
\begin{eqnarray}
B_{11}&=&-\frac{\partial F_2(\tilde{S},\tilde{I_2})}{\partial S} -\lambda <0 \nonumber \\
B_{13}&=&-\frac{\partial F_1(\tilde{S},0)}{\partial I_1} \nonumber \\
B_{14}&=&-\frac{\partial F_2(\tilde{S},\tilde{I_2})}{\partial I_2} \nonumber\\
B_{22}&=&-\mu-k \tilde{I_2}<0\nonumber\\
B_{24}&=&-k \tilde{V_1}<0\nonumber\\
B_{33}&=&\frac{\partial F_1(\tilde{S},0)}{\partial I_1}-\alpha_1=\alpha_1 \left(\tilde{\mathcal{R}_1}-1\right) \nonumber\\
B_{41}&=&\frac{\partial F_2(\tilde{S},\tilde{I_2})}{\partial S}>0. \nonumber
\end{eqnarray}

\begin{eqnarray}
B_{42}&=&k \tilde{I_2}>0 \nonumber\\
B_{44}&=&\frac{\partial F_2(\bar{S},\bar{I_2})}{\partial I_2}+k \tilde{V_1}-\alpha_2=\tilde{I_2}\frac{\partial f_2(\tilde{S},\tilde{I_2})}{\partial I_2}<0. \nonumber
\end{eqnarray}
The last equality regarding $ B_{44} $ is that equation \eqref{tw10} implies that $k \tilde{V_1}-\alpha_2=-f_2(\tilde{S},\tilde{I_2})$. The corresponding characteristic polynomial is
$$p(x)=-(B_{33}-x)(x^3+b_2 x^2+b_1 x+ b_0)$$
Then \eqref{tw20} has an eigenvalue equal to $B_{33}$ and the remaining ones
satisfy
$$(x^3+b_2 x^2+b_1 x+ b_0)=0.$$
Where
\begin{eqnarray}
b_2&=&-(B_{11}+B_{22}+B_{44})>0. \nonumber \\
b_1&=&B_{22}B_{11}+B_{22}B_{44}+B_{11}B_{44}-B_{14}B_{41}-B_{24}B_{42} \nonumber\\
b_0&=&-B_{22}B_{11}B_{44}-r B_{14} B_{42}+B_{14}B_{22}B_{41}+B_{11}B_{24}B_{42}. \nonumber
\end{eqnarray}
Note that
\begin{eqnarray}
B_{11}B_{44}-B_{14}B_{41}&=&-\lambda \left(\frac{\partial F_2(\bar{S},\bar{I_2})}{\partial I_2}+k \tilde{V_1}-\alpha_2 \right)+\left(-\frac{\partial F_2(\tilde{S},\tilde{I_2})}{\partial S}\right)\left(k \tilde{V_1}-\alpha_2 \right)>0\nonumber.
\end{eqnarray}
And
\begin{eqnarray}
-B_{22}B_{11}B_{44}-r B_{14} B_{42}+B_{14}B_{22}B_{41}&=&\left(\frac{\partial F_2(\tilde{S},\tilde{I_2})}{\partial S} +\lambda \right)\left(k \tilde{V_1}-\alpha_2\right)\left(-\mu-k \tilde{I_2} \right)\nonumber\\
&&-\left(-\mu \right)\left(\frac{\partial F_2(\bar{S},\bar{I_2})}{\partial I_2}\right)\left(-\mu-k \tilde{I_2} \right)\nonumber\\
&&-\left(-r \right)\left(\frac{\partial F_2(\bar{S},\bar{I_2})}{\partial I_2}\right)\left(-\mu\right)>0.\nonumber
\end{eqnarray}
Then $b_1$, $b_0>0$. Also
\begin{eqnarray}
b_2b_1-b_0&=&-B_{44}b_1-B_{22}\left(B_{22}B_{11}+B_{22}B_{44}-B_{24}B_{42}\right)-B_{22}\left( B_{11}B_{44}-B_{14}B_{41}\right)\nonumber\\
&&-B_{11}\left(B_{22}B_{11}+B_{22}B_{44}+B_{11}B_{44}-B_{14}B_{41}\right)+B_{11}B_{24}B_{42}\nonumber\\
&&+B_{22}B_{11}B_{44}+r B_{14} B_{42}-B_{14}B_{22}B_{41}-B_{11}B_{24}B_{42}\nonumber\\
&=&-B_{44}b_1-B_{22}\left(B_{22}B_{11}+B_{22}B_{44}-B_{24}B_{42}\right)\nonumber\\
&&-B_{11}\left(B_{22}B_{11}+B_{22}B_{44}+B_{11}B_{44}-B_{14}B_{41}\right)+r B_{14} B_{42}\geq 0. \nonumber
\end{eqnarray}
Applying the Routh–Hurwitz criterion, we see that all roots of $x^3+b_2 x^2+b_1 x+ b_0$ have
negative real parts. If $\bar{\mathcal{R}_1} > 1$, then $B_{33}>0$ therefore $E_2$ is unstable and if $\bar{\mathcal{R}_1} < 1$, then $B_{33}<0$ therefore $E_2$ is stable.
\end{proof}
\begin{rem}\label{remtw4}
$\tilde{S}\leq S^0$, then $\tilde{\mathcal{R}_1}\leq \mathcal{R}_1$, therefore if $\mathcal{R}_1< 1$ then $\tilde{\mathcal{R}_1}<1$.
\end{rem}

\begin{rem}\label{remtw5}
The theorem \ref{teole2} is valid for $\frac{\partial F_2(\tilde{S},\tilde{I_2})}{\partial I_2}> 0$ if $b_2b_1-b_0>0$ (Note that $b_i>0$ $i=0,1,2$).
\end{rem}
\begin{teo}\label{teoper}
If $\bar{\mathcal{R}_2}>1$ and $\tilde{\mathcal{R}_1}>1$ then system (1) is uniformly persistent. 
\end{teo}
\begin{proof}
The result follows from an application of Theorem 4.6 in [12], with $ X_1 = \text{int} (\mathbb{R}^4_+) $ and $X_2=\text{bd}(\mathbb{R}^4 _+)$ this choice is in accordance by virtue of Lemma \ref{lemaco} there exists a compact set $\Omega$ in which all solution of system \eqref{tw3} initiated in $ \mathbb{R}^4_+ $  ultimately enter and remain forever after. The compactness condition $ C_ {4.2}$ is easily verified for this set
$\Omega_1$. Denoting the omega limit set of the solution $x(t,x_0)$ of system \eqref{tw3} starting in $x_0 \in \mathbb{R}^4_+$ by $w(x_0)$. Note that $w(x_0)$ is bounded (Lemma \ref{lemaco}), we need to determine the following set:     
\begin{eqnarray}
\Omega_2=\bigcup_{y \in Y_2} w(y), \ \text{where} \ Y_2=\left\lbrace x_0 \in X_2| x(t,x_0)\in X_2, \forall t>0 \right\rbrace . \nonumber
\end{eqnarray}
From the system equations \eqref{tw3} it follows that all solutions starting in $\text{bd}(\mathbb{R}^4_+)$ but not on the $I_1$ axis or $I_2$ axis  leave $\text{bd}(\mathbb{R}^4 _+)$ and that the axes $I_1$ and 
$I_2$ are invariant sets, which implies that 
$$Y_2=\left\lbrace (S,V_1,I_1,I_2) \in \text{bd}(\mathbb{R}^4 _+)| I_1=0 \ \text{or} \ I_2=0 \right\rbrace.$$ 
Therefore $\Omega_2=\{E_0,E_1,E_2\}$, then $\bigcup_{i=1}^3 \{E_i\}$ is a covering of $\Omega_2$, which is isolated (since $E_i$ $(i=1,2,3)$ is a saddle point) and acyclic. It will be enough to show that ${E_i}$ (i=1,2,3) is a weak repeller for $X_ 1$. 

By definition $\{E_i\}$ is a weak repeller for $X_1$ if for every solution $(S(t),V_1(t),I_1(t),I_2(t))$ starting in $(S_0,V_{10},I_{10},I_{20}) \in X_1$
 \begin{eqnarray}
\limsup_{t \to +\infty} \|(S(t),V_1(t),I_1(t),I_2(t))-E_i \|>0. \nonumber
 \end{eqnarray}
We will first show that $\{E_0\}$ is a weak repeller for $X_1$, Suppose the claim is false, i.e, there exists a solution $(S(t),V_1(t),I_1(t),I_2(t))$ starting in $(S_0,V_{10},I_{10},I_{20}) \in X_1$ such that
 \begin{eqnarray}
\limsup_{t \to +\infty} \|(S(t),V_1(t),I_1(t),I_2(t))-E_1 \|=0. \nonumber
 \end{eqnarray}
Then exists $T_1>0$ such that for every $\eta_1>0$  
\begin{center}
$S^0-\eta_1<S(t)$, $V_1^0-\eta_1<V_1(t)$, $0<I_1(t)<\eta_1$ and $0<I_2(t)<\eta_1$ $\forall t\geq T_1$ 
\end{center}  
Since $\bar{\mathcal{R}_2}>1$ and $\tilde{\mathcal{R}_1}>1$, then $\mathcal{R}_2=\frac{1}{\alpha_2}\left( f_2(S^0,0)+kV^0\right)>1$ and $\mathcal{R}_1=\frac{1}{\alpha_1}\left( f_1(S^0,0)\right)>1$, therefore $f_2(S^0,0)+kV^0-\alpha_2>0$ and $f_1(S^0,0)-\alpha_1>0$. Because of the continuity of $f_2(S,I_2)+kV_1-\alpha_2$ and $f_1(S,I_1)-\alpha_1$ exist a sufficiently small constant $\eta_2>0$, such that $f_1(S^0-\eta_2 ,\eta_2)-\alpha_1>0$ and $f_2(S^0-\eta_2 ,\eta_2)+k (V_1^0-\eta_2)-\alpha_1>0$.

Let  $\eta_1=\eta_2$, then for $t\geq T_1$.
\begin{eqnarray}
\dot{I_1}&=&I_1\left(f_1(S,I_1)-\alpha_1 \right)\nonumber\\
&\geq& I_1\left(f_1(S^0-\eta_2 ,\eta_2)-\alpha_1 \right).\nonumber
\end{eqnarray} 
and
\begin{eqnarray}
\dot{I_2}&=&I_2\left(f_2(S,I_2)+k V_1-\alpha_2 \right)\nonumber\\
&\geq& I_2\left(f_2(S^0-\eta_2 ,\eta_2)+k (V_1^0-\eta_2)-\alpha_2 \right).\nonumber
\end{eqnarray} 
By comparison principle, we have 
\begin{center}
$I_1(t)\geq I_1(T_1)e^{(f_1(S^0-\eta_2 ,\eta_2)-\alpha_1)(t-T_1)}$ and $I_2(t)\geq I_2(T_1)e^{(f_2(S^0-\eta_2 ,\eta_2)+k (V_1^0-\eta_2)-\alpha_2)(t-T_1)} $, $\forall t\geq T_1$.     
\end{center}
Note that $I_1(T_1)$, $I_2(T_1)>0$, which implies that $\displaystyle \lim_{t \to \infty} I_1=\lim_{t \to \infty} I_2=\infty$, this gives a contradiction. Then $\{E_0\}$ is a weak repeller for $X_1$.

Similarly it is shown  that $ \{E_1 \} $ and $ \{E_2 \}$ are weak repeller for $X_1$. Then we conclude that system \eqref{tw3} is uniformly persistent.    
\end{proof}
Further, it is proved in [13] uniform persistence implies the existence of an interior equilibrium point. Therefore, we have established the following.
\begin{teo}\label{teoee3}
The model \eqref{tw3} admits a endemic equilibrium $E_3=(S^*,V_1^*,I_1^*,I_2^*)$ if $\bar{\mathcal{R}_2}>1$ and $\tilde{\mathcal{R}_1}>1$.
\end{teo}
\begin{teo}\label{teole3}
If $c_1c_2-c_3>0$ and $c_1c_2c_3-c_3^2-c_1^2c_4>0$, where
\begin{eqnarray}
c_1&=&-C_{44}-C_{33}-C_{22}-C_{11}\nonumber \\
c_2&=&-C_{41}C_{14}-C_{42}C_{24}+C_{44}C_{33}+C_{44}C_{22}+C_{44}C_{11}-C_{31}C_{13}+C_{33}C_{22}\nonumber\\
&&+C_{33}C_{11}+C_{22}C_{11} \nonumber
\end{eqnarray}
\begin{eqnarray}
c_3&=&-rC_{42}C_{14}+C_{41}C_{14}C_{33}+C_{41}C_{14}C_{22}+C_{42}C_{24}C_{33}+C_{42}C_{24}C_{11}+C_{44}C_{31}C_{13}\nonumber\\
&&-C_{44}C_{33}C_{22}-C_{44}C_{33}C_{11}-C_{44}C_{22}C_{11}+C_{31}C_{13}C_{22}-C_{33}C_{22}C_{11}\nonumber\\
c_4&=&rC_{42}C_{14}C_{33}-C_{41}C_{14}C_{33}C_{22}+C_{42}C_{24}C_{31}C_{13}-C_{42}C_{24}C_{33}C_{11}\nonumber\\
&&-C_{44}C_{31}C_{13}C_{22}+C_{44}C_{33}C_{22}C_{11}.\nonumber
\end{eqnarray}Then $E_3$ is locally asymptotically stable.
\end{teo}
\begin{proof}
Then Eq. \eqref{tw16} at the equilibrium $E_3$  is
\begin{eqnarray}
J_{E_3}&=& \left( \begin{matrix}
C_{11}   & 0 &C_{13} &C_{14} \\
r & C_{22} &0& C_{24} \\
C_{31}& 0 & C_{33} & 0\\
C_{41} & C_{42} & 0&C_{44}\nonumber  
\end{matrix} \right).
\end{eqnarray}
Where
\begin{eqnarray}
C_{11}&=&-\frac{\partial F_1(S^*,I_1^*)}{\partial S}-\frac{\partial F_2(S^*,I_2^*)}{\partial S} -\lambda <0 \nonumber \\
C_{13}&=&-\frac{\partial F_1(S^*,I_1^*)}{\partial I_1} \nonumber \\
C_{14}&=&-\frac{\partial F_2(S^*,I_2^*)}{\partial I_2} \nonumber\\
C_{22}&=&-\mu -kI_2^*<0 \nonumber\\
C_{24}&=&-k V_1^*<0\nonumber\\
C_{31}&=&\frac{\partial F_1(S,I_1^*)}{\partial S}>0 \nonumber\\
C_{33}&=&\frac{\partial F_1(S^*,I_1^*)}{\partial I_1}-\alpha_1= I_1^*\frac{\partial f_1(S^*,I_1^*)}{\partial {I_1}}+f_1(S^*,I_1^*)-\alpha_1= I_1^*\frac{\partial f_1(S^*,I_1^*)}{\partial {I_1}}\leq 0 \nonumber\\
C_{41}&=&\frac{\partial F_2(S^*,I_2^*)}{\partial S}>0. \nonumber\\
C_{42}&=&k I_2^*>0. \nonumber\\
C_{44}&=&\frac{\partial F_2(S^*,I_2^*)}{\partial I_2}+k V_1^*-\alpha_2=I_2^*\frac{\partial f_2(S^*,I_2^*)}{\partial I_2}\leq 0. \nonumber
\end{eqnarray}
The corresponding characteristic polynomial is
$$p(x)=x^4+c_1x^3+c_2x^2+c_3x+c_4.$$
Note that $c_1>0$,
\begin{eqnarray}
-C_{41}C_{14}+C_{44}C_{11}&=&C_{44}(C_{11}+C_{41})-C_{41}\left(kV_1^*-\alpha_2\right)>0 \nonumber\\
C_{33}C_{11}-C_{31}C_{13}&=&C_{33}(C_{11}+C_{31})-C_{31}\left(-\alpha_1\right)>0.\nonumber
\end{eqnarray}
then $c_2>0$, If $C_{14}\geq 0$ then $c_3>0$ and $c_4>0$, while if $C_{14}<0$ we have that   
\begin{eqnarray}
C_{41}C_{14}C_{33}+C_{44}C_{31}C_{13}-C_{44}C_{33}C_{11}&=&-C_{44}C_{33}(C_{11}+C_{41}+C_{31})-C_{44}(\alpha_1)(C_{31})\nonumber\\
&&-(kV_1^*-\alpha_2)C_{33}(-C_{41})>0\nonumber\\
-rC_{42}C_{14}-C_{44}C_{22}C_{11}+C_{41}C_{14}C_{22}&=&-C_{44}C_{22}(C_{11}+C_{41}+C_{31}+r)-(C_{14})(\mu)(r)\nonumber\\
&&+(kV_1^*-\alpha_2)C_{22}(C_{41}+r)+C_{44}C_{22}C_{31}>0.\nonumber
\end{eqnarray}
and
\begin{eqnarray}
rC_{42}C_{14}C_{33}-C_{44}C_{33}C_{22}C_{11}-C_{41}C_{14}C_{33}C_{22}-C_{44}C_{31}C_{13}C_{22} >0.\nonumber
\end{eqnarray}
Then $c_3>0$ and $c_4>0$. If $c_1c_2-c_3>0$ and $c_1c_2c_3-c_3^2-c_1^2c_4>0$ by Routh–Hurwitz criterion, we see that all roots of $x^4+c_1x^3+c_2x^2+c_3x+c_4$ have negative real parts, then $E_3$ is locally asymptotically stable.
\end{proof}
\subsubsection{Global stability of equilibria}
In this section, we study the global properties of the equilibria. We use Lyapunov function to show the global stabilities. Such Lyapunov functions all take advantage of the properties of the function.
$$g(x)=x-1-ln(x).$$
which is positive in $\mathbb{R}_+$ except at $x = 1$, where it vanishes.
\begin{teo}\label{teoge0}
The DFE $E_0$ is globally asymptotically stable if,
\begin{equation}
\mathcal{R}_0<1.\nonumber
\end{equation}
\end{teo}
\begin{proof}
Consider the Lyapunov function
\begin{equation}
V(S,V_1,I_1,I_2)=I_1+I_2,\nonumber
\end{equation}
Since $I_1 , I_2> 0$, then $V ( S, V_1 , I_1 , I_2) \geq 0$ and $V ( S, V_1 , I_1 , I_2)$ attains zero at $I_1=I_2=0$.

Now, we need to show $\dot{V}<0 $.
\begin{eqnarray}
\dot{V}&=&\dot{I_1}+\dot{I_2}\nonumber \\ 
&=&F_1 (S,I_1)-\alpha_1 I_1+F_2 (S,I_2)+kI_2 V_1-\alpha_2 I_2.\nonumber\\
&=&I_1(f_1 (S,I_1)-\alpha_1)+I_2(f_2 (S,I_2)+k V_1-\alpha_2). \nonumber
\end{eqnarray}
For $S\leq S^0$ and $V_1\leq V_1^0$
\begin{eqnarray}
\dot{V}&\leq & I_1(f_1 (S^0,0)-\alpha_1)+I_2(f_2 (S^0,0)+k V_1^0-\alpha_2).\nonumber\\
&=&I_1\left(\frac{\partial{F_1 (S^0,0)}}{\partial I_1}-\alpha_1\right)+I_2\left(\frac{\partial{F_2 (S^0,0)}}{\partial I_2}+k V_1^0-\alpha_2\right)\nonumber\\
&=&\alpha_1 I_1\left(\mathcal{R}_1-1\right)+\alpha_2 I_2\left(\mathcal{R}_2-1\right)\leq 0.\nonumber
\end{eqnarray}
Furthermore, $\frac{\text{d}v}{\text{d}t}=0$ if and only if $I_1=I_2=0$, so the largest invariant set contained in $\left\lbrace(S,V_1,I_1,I_2)\in \Omega_1 | \frac{dV}{dt}=0  \right\rbrace$ is the hyperplane $I_1=I_2=0$, By LaSalle's invariant principle, this implies that all solution in $\Omega_1$ approach the hyperplane $I_1=I_2=0$ as $t\to \infty$. Also, All solution of \eqref{tw3} contained in such plane satisfy $\dot{S}=\Lambda-\lambda S$, $\dot{V_1}=rS-\mu V_1$, which implies that $S\to \frac{\Lambda}{\lambda}$ and $V_1 \to \frac{r \Lambda}{\mu  \lambda}$ as $t \to \infty$, that is, all of these solution approach $E_0$. Therefore we conclude that $E_0$ is globally asymptotically stable in $\Omega_1$.

Now we will show that every solution $(S(t),V_1(t),I_1(t),I_2(t))\in \mathcal{R}^4_+$, where $t\to \infty$ $(S(t),V_1(t),I_1(t),I_2(t))\in \Omega_1$, let $(S(t),V_1(t),I_1(t),I_2(t))\in \mathcal{R}^4_+$.
Then 
\begin{eqnarray}
\dot{S}&\leq& \Lambda-\lambda S\nonumber
\end{eqnarray}
By the comparison principle $ \displaystyle \lim_{t \to \infty} \sup S(t)\leq \frac{\Lambda}{\lambda}
=S^0$. Then $S(t)\leq S^0$ for $t$ sufficiently large.
 
Also if $S(t)\leq S^0$. 
\begin{eqnarray}
\dot{V_1}&\leq& rS^0-(\mu + k I_2)V_1\leq  rS^0-\mu V_1 \nonumber
\end{eqnarray}
By the comparison principle $ \displaystyle \lim_{t \to \infty} \sup V(t)\leq \frac{r S^0}{\mu}=V_1^0$. Therefore $E_0$ is globally asymptotically stable.
\end{proof}

From now on, we assume that
\begin{itemize}
\item[H4)] For $i=1,2.$ $f_i(S,I_i)=Sg_i(S,I_i)$.
\end{itemize}
\begin{lem}\label{lempGe}
Let $a>0$ be a constant, for $i=1,2$ if $\frac{\partial F_i(S,I_i)}{\partial I_i}\geq 0$, then
\begin{eqnarray}
\left(\frac{I_i}{a}-\frac{F_i(S,I_i)}{F_i(S,a)} \right)\left(\frac{F_i(S,a)}{F_i(S,I_i)}-1 \right)\leq 0 \nonumber
\end{eqnarray}
\end{lem}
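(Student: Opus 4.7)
The plan is to reduce the inequality to a straightforward sign analysis by splitting into the two cases $I_i \geq a$ and $I_i \leq a$ and determining the sign of each factor separately.

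First I would rewrite $F_i(S, I_i) = I_i f_i(S, I_i)$ using hypothesis (H1), so that the first factor becomes
\begin{equation}
\frac{I_i}{a} - \frac{F_i(S,I_i)}{F_i(S,a)} = \frac{I_i}{a F_i(S,a)}\bigl[F_i(S,a) - a\, f_i(S,I_i)\bigr] = \frac{I_i}{F_i(S,a)}\bigl[f_i(S,a) - f_i(S,I_i)\bigr]. \nonumber
\end{equation}
By (H2), $f_i(S,\cdot)$ is non-increasing, so the sign of this first factor is the sign of $I_i - a$ (nonnegative when $I_i \geq a$, nonpositive when $I_i \leq a$).

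Next, for the second factor, I would invoke the extra hypothesis $\partial F_i/\partial I_i \geq 0$ of the lemma, which makes $F_i(S, \cdot)$ non-decreasing. Hence $F_i(S,a) \leq F_i(S,I_i)$ when $I_i \geq a$ and the reverse when $I_i \leq a$, so $\frac{F_i(S,a)}{F_i(S,I_i)} - 1$ has the opposite sign to $I_i - a$.

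Combining the two observations, the two factors always have opposite (non-strict) signs, so their product is $\leq 0$, which is the claim. I do not expect any serious obstacle: the only subtle point is handling degenerate cases such as $I_i = 0$ or $F_i(S,I_i) = 0$, where by (H1) the first factor vanishes or the expression becomes trivially well-behaved after cancelling the common $I_i$, so the inequality still holds (or is interpreted as an equality at the boundary).
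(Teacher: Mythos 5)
Your proposal is correct and follows essentially the same route as the paper's own proof: both factor out $I_i$ from the first term via (H1) so that its sign is governed by the monotonicity of $f_i(S,\cdot)$ from (H2), use the extra hypothesis $\frac{\partial F_i}{\partial I_i}\geq 0$ to fix the sign of the second factor, and conclude by the case split $I_i\geq a$ versus $I_i\leq a$. Your additional remark about the degenerate case $F_i(S,I_i)=0$ is a small point of care the paper omits, but it does not change the argument.
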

\begin{proof}
Note that
\begin{eqnarray}
\left(\frac{I_i}{a}-\frac{F_i(S,I_i)}{F_i(S,a)} \right)\left(\frac{F_i(S,a)}{F_i(S,I_i)}-1 \right)&=&\frac{I_i}{a}\left(1-\frac{f_i(S,I_i)}{f_i(S,a)} \right)\left(\frac{F_i(S,a)}{F_i(S,I_i)}-1 \right)  \nonumber
\end{eqnarray}
If $a\geq I_i$, then
\begin{eqnarray}
\frac{f_i(S,I_i)}{f_i(S,a)}\geq 1 \ \text{and} \ \frac{F_i(S,a)}{F_i(S,I_i)}\geq 1. \nonumber
\end{eqnarray}
If $a\leq I_i$, then
\begin{eqnarray}
\frac{f_i(S,I_i)}{f_i(S,a)}\leq 1 \ \text{and} \ \frac{F_i(S,a)}{F_i(S,I_i)}\leq 1. \nonumber
\end{eqnarray}
Therefore
\begin{eqnarray}
\left(\frac{I_i}{a}-\frac{F_i(S,I_i)}{F_i(S,a)} \right)\left(\frac{F_i(S,a)}{F_i(S,I_i)}-1 \right)\leq 0 \nonumber.
\end{eqnarray}
\end{proof}

\begin{teo}\label{teoge1}
 $E_1$ is globally asymptotically stable if,
\begin{center}
$\mathcal{R}_2<1$.
\end{center}
\end{teo}
\begin{proof}
Consider the Lyapunov function
\begin{equation}
V(S,V_1,I_1,I_2)=I_2,\nonumber
\end{equation}
Since $I_2> 0$, then $V ( S, V_1 , I_1 , I_2) \geq 0$ and $V ( S, V_1 , I_1 , I_2)$ attains zero at $I_2=0$.
Now, we need to show $\dot{V}<0 $.
\begin{eqnarray}
\dot{V}&=&\dot{I_2}\nonumber \\ 
&=&F_2 (S,I_2)+kI_2 V_1-\alpha_2 I_2.\nonumber\\
&=&I_2(f_2 (S,I_2)+k V_1-\alpha_2). \nonumber
\end{eqnarray}
For $S\leq S^0$ and $V_1\leq V_1^0$
\begin{eqnarray}
\dot{V}&\leq & I_2(f_2 (S^0,0)+k V_1^0-\alpha_2).\nonumber\\
&=&I_2\left(\frac{\partial{F_2 (S^0,0)}}{\partial I_2}+k V_1^0-\alpha_2\right)\nonumber\\
&=&\alpha_2 I_2\left(\mathcal{R}_2-1\right)\leq 0.\nonumber
\end{eqnarray}
Furthermore, $\frac{\text{d}V}{\text{d}t}=0$ if and only if $I_2=0$. Suppose that $(S(t), V_1(t), I_1(t),I_2(t))$ is a solution of \eqref{tw3} contained entirely in the set $M =\{(S(t), V_1(t), I_1(t),I_2(t)) \in \Omega_1|\dot{V}=0 \}$. Then, $\dot{I_2}=0$ and, from the above inequalities, we have
$I_2 = 0$. Thus, the largest positively invariant set contained in $M$ is the plane $I_2 = 0$. By
LaSalle’s invariance principle, this implies that all solutions in approach the plane
$I_2 = 0$ as $t\to \infty$. On the other hand, solutions of (4) contained in such plane satisfy
\begin{eqnarray}
\dot{S}&=&\Lambda- F_1 (S,I_1)-\lambda S \nonumber \\
 \dot{V_1}&=&rS-(\mu)V_1\nonumber\\
 \dot{I_1}&=&F_1 (S,I_1)-\alpha_1 I_1. \nonumber
\end{eqnarray}
Now we will show that $S(t)\to \bar{S}$, $V_1(t)\to \bar{V_1}$ and $I_1(t)\to \bar{I_1}$
Consider the Lyapunov function
\begin{equation}
V(S,V_1,I_1)=\int^S_{\tilde{S}} \left(1-\frac{F_1(\bar{S},\bar{I_1})}{F_1(\chi,\bar{I_1})} \right)\text{d}\chi +\bar{I_1}g\left(\frac{I_1}{\bar{I_1}}\right). \nonumber
\end{equation}
Note that $1-\frac{F_1(\bar{S},\bar{I_1})}{F_1(\chi,\bar{I_1})}=\frac{\bar{I_1}(f_1(S,\bar{I_1})-f_1(\bar{S},\bar{I_1}))}{F_1(\chi,\bar{I_1})}$, by H2) $f_1(S,\bar{I_1})-f_1(\bar{S},\bar{I_1})\geq 0$ if $S\geq \tilde{S}$ and $f_1(S,\bar{I_1})-f_1(\bar{S},\bar{I_1})\leq 0$ if $S\leq \tilde{S}$, then $\int^S_{\tilde{S}} \left(1-\frac{F_1(\bar{S},\bar{I_1})}{F_1(\chi,\bar{I_1})} \right)\text{d}\chi\geq 0$ for all $S$. Therefore, $V ( S, V_1 , I_1) \geq 0$ and $V ( S, V_1 , I_1)$ attains zero at $S(t)=\bar{S},$ and $I_1(t)=\bar{I_1}$. 

Now, we need to show $\dot{V}<0 $.
\begin{eqnarray}
\dot{V}&=&\left(1-\frac{F_1(\bar{S},\bar{I_1})}{F_1(S,\bar{I_1})} \right)\dot{S}+\left(1-\frac{\bar{I_1}}{I_1} \right)\dot{I_1}\nonumber \\ 
&=&\left(1-\frac{F_1(\bar{S},\bar{I_1})}{F_1(S,\bar{I_1})} \right)\left(\Lambda - F_1(S,I_1)-\lambda S \right)+\left(1-\frac{\bar{I_1}}{I_1} \right)(F_1(S,I_1)-\alpha_1 I_1)\nonumber\\
&=&\left(1-\frac{F_1(\bar{S},\bar{I_1})}{F_1(S,\bar{I_1})} \right)\left(\lambda \bar{S}+F_1(\bar{S},\bar{I_1}) - F_1(S,I_1)-\lambda S \right)\nonumber \\
&&+F_1(S,I_1)-\alpha_1 I_1- \bar{I_1}f_1(S,I_1)+\alpha_1 \bar{I_1} \nonumber
\end{eqnarray}
\begin{eqnarray}
&=&\lambda(\bar{S}-S)\left(1-\frac{F_1(\bar{S},\bar{I_1})}{F_1(S,\bar{I_1})} \right)+\left(1-\frac{F_1(\bar{S},\bar{I_1})}{F_1(S,\bar{I_1})} \right)F_1(\bar{S},\bar{I_1})- F_1(S,I_1)\nonumber \\
&&+\frac{F_1(\bar{S},\bar{I_1})}{F_1(S,\bar{I_1})} F_1(S,I_1)+F_1(S,I_1)-\frac{I_1 F_1(\bar{S},\bar{I_1})}{\bar{I_1}} - \bar{I_1}f_1(S,I_1)+F_1(\bar{S},\bar{I_1}) \nonumber\\
&=&\left(2-\frac{F_1(\bar{S},\bar{I_1})}{F_1(S,\bar{I_1})}+\frac{F_1(S,I_1)}{F_1(S,\bar{I_1})} -\frac{I_1}{\bar{I_1}} - \frac{\bar{I_1}f_1(S,I_1)}{F_1(\bar{S},\bar{I_1})}\right)F_1(\bar{S},\bar{I_1}) \nonumber\\
&&+\lambda(\bar{S}-S)\left(1-\frac{F_1(\bar{S},\bar{I_1})}{F_1(S,\bar{I_1})} \right).\nonumber
\end{eqnarray}
Note that
\begin{eqnarray}
\lambda(\bar{S}-S)\left(1-\frac{F_1(\bar{S},\bar{I_1})}{F_1(S,\bar{I_1})} \right)=\lambda(\bar{S}-S)\left(1-\frac{f_1(\bar{S},\bar{I_1})}{f_1(S,\bar{I_1})} \right)\leq 0. \nonumber
\end{eqnarray}
and 
\begin{scriptsize}
\begin{eqnarray}
2-\frac{F_1(\bar{S},\bar{I_1})}{F_1(S,\bar{I_1})}+\frac{F_1(S,I_1)}{F_1(S,\bar{I_1})} -\frac{I_1}{\bar{I_1}} - \frac{\bar{I_1}f_1(S,I_1)}{F_1(\bar{S},\bar{I_1})}&=&2-\frac{F_1(\bar{S},\bar{I_1})}{F_1(S,\bar{I_1})}+\frac{F_1(S,I_1)}{F_1(S,\bar{I_1})} -\frac{I_1}{\bar{I_1}} \nonumber\\
&&- \frac{\bar{I_1}F_1(S,I_1)}{I_1F_1(\bar{S},\bar{I_1})}+1-\frac{F_1(S,I_1)F_1(S,\bar{I_1})}{F_1(S,I_1)F_1(S,\bar{I_1})}\nonumber\\
&&+\frac{IF_1(S,\bar{I_1})}{\bar{I_1}F_1(S,I_1)}-\frac{IF_1(S,\bar{I_1})}{\bar{I_1}F_1(S,I_1)}\nonumber\\
&=&3-\frac{F_1(\bar{S},\bar{I_1})}{F_1(S,\bar{I_1})}-\frac{\bar{I_1}F_1(S,I_1)}{I_1F_1(\bar{S},\bar{I_1})}-\frac{IF_1(S,\bar{I_1})}{\bar{I_1}F_1(S,I_1)}\nonumber\\
&+&\left(\frac{I_1}{\bar{I_1}}-\frac{F_1(S,I_1)}{F_1(S,\bar{I_1})} \right)\left(\frac{F_1(S,\bar{I_1})}{F_1(S,I_1)}-1 \right)\leq 0.\nonumber
\end{eqnarray}
\end{scriptsize}
Then $\dot{V}\leq 0$. Furthermore, $\frac{\text{d}V}{\text{d}t}=0$ if and only if $S=\bar{S}$ and $I_1=\bar{I_1}$, which implies that $S\to \bar{S}$, $I_1 \to \bar{I_1}$ and $I_2\to 0$ as $t \to \infty$. By LaSalle's invariant principle, this implies that all solution in $\Omega_1$ approach the plane $S= \bar{S}$, $I_1= \bar{I_1}$ and $I_2= 0$ as $t\to \infty$. Also, All solution of \eqref{tw3} contained in such plane satisfy $\dot{V_1}=r\bar{S}-\mu V_1$, which implies that $V_1 \to \frac{r\bar{S}}{\mu}=\bar{V_1}$ as $t \to \infty$, that is, all of these solution approach $E_1$. Therefore we conclude that $E_1$ is globally asymptotically stable in $\Omega_1$.
\end{proof}

\begin{teo}\label{teoge2}
$E_2$ is globally asymptotically stable if,
\begin{center}
$\mathcal{R}_1<1$ and $2-\frac{F_2(\tilde{S},\tilde{I_2})}{F_2(S,\tilde{I_2})}+\frac{S F_2(\tilde{S},\tilde{I_2})}{\tilde{S} F_2(S,\tilde{I_2})}-\frac{V_1}{\tilde{V_1}}-\frac{S\tilde{V_1}}{\tilde{S}V_1}\leq 0.$
\end{center}
\end{teo}

\begin{proof}
Consider the Lyapunov function
\begin{equation}
V(S,V_1,I_1,I_2)=I_1. \nonumber
\end{equation}
 Since $I_1> 0$, then $V ( S, V_1 , I_1 , I_2) \geq 0$ and $V ( S, V_1 , I_1 , I_2)$ attains zero at $I_1=0$.
Now, we need to show $\dot{V}<0 $.
\begin{eqnarray}
\dot{V}&=&\dot{I_1}\nonumber \\ 
&=&F_1 (S,I_1)-\alpha_1 I_1\nonumber\\
&=&I_1(f_1 (S,I_1)-\alpha_1) \nonumber
\end{eqnarray}
For $S\leq S^0$ 
\begin{eqnarray}
\dot{V}&\leq & I_1(f_1 (S^0,0)-\alpha_1)\nonumber\\
&=&I_1\left(\frac{\partial{F_1 (S^0,0)}}{\partial I_1}-\alpha_1\right)=\alpha_1 I_1\left(\mathcal{R}_1-1\right)\leq 0.\nonumber
\end{eqnarray}
Furthermore, $\frac{\text{d}v}{\text{d}t}=0$ if and only if $I_1=0$. Suppose that $(S(t), V_1(t), I_1(t),I_2(t))$ is a solution of \eqref{tw3} contained entirely in the set $M =\{(S(t), V_1(t), I_1(t),I_2(t)) \in \Omega_1|\dot{V}=0 \}$. Then, $\dot{I_1}=0$ and, from the above inequalities, we have $I_1 = 0$. Thus, the largest positively invariant set contained in $M$ is the plane $I_1 = 0$. By
LaSalle’s invariance principle, this implies that all solutions in approach the plane
$I_1 = 0$ as $t\to \infty$. On the other hand, solutions of \eqref{tw3} contained in such plane satisfy.
\begin{eqnarray}
\dot{S}&=&\Lambda- F_2 (S,I_2)-\lambda S \nonumber \\
 \dot{V_1}&=&rS-(\mu-k I_2)V_1\nonumber\\
 \dot{I_2}&=&F_2 (S,I_2)-kV_1 I_2-\alpha_1 I_2. \nonumber
\end{eqnarray}
Now we will show that $S(t)\to \tilde{S}$, $V_1(t)\to \tilde{V_1}$ and $I_1(t)\to \tilde{I_1}$
Consider the Lyapunov function
\begin{equation}
V(S,V_1,I_2)=\int^S_{\tilde{S}} \left(1-\frac{F_2(\tilde{S},\tilde{I_2})}{F_2(\chi,\tilde{I_2})} \right)\text{d}\chi+\tilde{V_1}g\left(\frac{V_1}{\tilde{V_1}}\right) +\tilde{I_2}g\left(\frac{I_2}{\tilde{I_2}}\right). \nonumber
\end{equation}
Now, we need to show $\dot{V}<0 $.
\begin{eqnarray}
\dot{V}&=&\left(1-\frac{F_2(\tilde{S},\tilde{I_2})}{F_2(S,\tilde{I_2})} \right)\dot{S}+\left(1-\frac{\tilde{V_1}}{V_1} \right)\dot{V_1}+\left(1-\frac{\tilde{I_2}}{I_2} \right)\dot{I_2}\nonumber \\ 
&=&\left(1-\frac{F_2(\tilde{S},\tilde{I_2})}{F_2(S,\tilde{I_2})} \right)\left(\Lambda - F_2(S,I_2)-\lambda S \right)+\left(1-\frac{\tilde{V_1}}{V_1} \right)(rS-(\mu+kI_2)V_1)\nonumber\\
&&+\left(1-\frac{\tilde{I_2}}{I_2} \right)(F_2(S,I_2)+kI_2V_1-\alpha_2 I_2)\nonumber\\
&=&\left(1-\frac{F_2(\tilde{S},\tilde{I_2})}{F_2(S,\bar{I_2})} \right)\left(\lambda \bar{S}+F_2(\tilde{S},\tilde{I_2}) - F_2(S,I_2)-\lambda S \right)+rS-(\mu+kI_2)V_1\nonumber \\
&&-r\frac{S\tilde{V_1}}{V_1}+(\mu+kI_2)\tilde{V_1}+F_2(S,I_2)+kI_2V_1-\alpha_2 I_2- \tilde{I_2}f_2(S,I_2)-k\tilde{I_2}V_1+\alpha_2 \tilde{I_2} \nonumber\\
&=&\mu(\tilde{S}-S)\left(1-\frac{F_2(\tilde{S},\tilde{I_2})}{F_2(S,\tilde{I_2})} \right)+r\left(\tilde{S}-\tilde{S}\frac{F_2(\tilde{S},\tilde{I_2})}{F_2(S,\tilde{I_2})}-S+S\frac{F_2(\tilde{S},\tilde{I_2})}{F_2(S,\tilde{I_2})} \right)\nonumber \\
&&+\left(1-\frac{F_2(\tilde{S},\tilde{I_2})}{F_2(S,\tilde{I_2})} \right)F_2(\tilde{S},\tilde{I_2})+\frac{F_2(\tilde{S},\tilde{I_2})}{F_2(S,\tilde{I_2})} F_2(S,I_2)+rS \nonumber\\
&&-\frac{r \tilde{S}}{\tilde{V_1}} V_1-r\frac{S\tilde{V_1}}{V_1}+r \tilde{S}-\frac{I_2 F_2(\tilde{S},\tilde{I_2})}{\tilde{I_2}}- \tilde{I_2}f_2(S,I_2)+F_2(\tilde{S},\tilde{I_2}) \nonumber\\
&=&\mu(\tilde{S}-S)\left(1-\frac{F_2(\tilde{S},\tilde{I_2})}{F_2(S,\tilde{I_2})} \right)+r\tilde{S}\left(2-\frac{F_2(\tilde{S},\tilde{I_2})}{F_2(S,\tilde{I_2})}+\frac{S F_2(\tilde{S},\tilde{I_2})}{\tilde{S} F_2(S,\tilde{I_2})}-\frac{V_1}{\tilde{V_1}}-\frac{S\tilde{V_1}}{1\tilde{S}V_1} \right) \nonumber\\
&&+\left(2-\frac{F_2(\tilde{S},\tilde{I_2})}{F_2(S,\tilde{I_2})}+\frac{F_2(S,I_2)}{F_2(S,\tilde{I_2})} -\frac{I_2}{\tilde{I_2}} - \frac{\tilde{I_2}f_2(S,I_2)}{F_2(\tilde{S},\tilde{I_2})}\right)F_2(\tilde{S},\tilde{I_2}). \nonumber
\end{eqnarray}
Note that
\begin{eqnarray}
\mu(\tilde{S}-S)\left(1-\frac{F_2(\tilde{S},\tilde{I_2})}{F_2(S,\tilde{I_2})} \right)\leq 0. \nonumber
\end{eqnarray}
\begin{eqnarray}
2-\frac{F_2(\tilde{S},\tilde{I_2})}{F_2(S,\tilde{I_2})}+\frac{F_2(S,I_2)}{F_2(S,\tilde{I_2})} -\frac{I_2}{\tilde{I_2}} - \frac{\tilde{I_2}f_2(S,I_2)}{F_2(\tilde{S},\tilde{I_2})}\leq 0\nonumber
\end{eqnarray}
We conclude $\dot{V}<0$. Therefore $E_3$ is globally asymptotically stable.
\end{proof}
\begin{rem}\label{remtw6}
Note that if $ \frac{\partial g_2(S,I_2)}{\partial S}\geq 0$, then
\begin{small}
\begin{eqnarray}
2-\frac{F_2(\tilde{S},\tilde{I_2})}{F_2(S,\tilde{I_2})}+\frac{S F_2(\tilde{S},\tilde{I_2})}{\tilde{S} F_2(S,\tilde{I_2})}-\frac{V_1}{\tilde{V_1}}-\frac{S\tilde{V_1}}{\tilde{S}V_1}&=&3-\frac{V_1}{\tilde{V_1}}-\frac{S\tilde{V_1}}{\tilde{S}V_1}-\frac{\tilde{S}}{S}\nonumber\\
&&+\left(-1+\frac{\tilde{S}}{S}\right)\left(1-\frac{g_2(\tilde{S},\tilde{I_2})}{g_2(S,\tilde{I_2})}\right)\leq 0. \nonumber
\end{eqnarray}
\end{small}
\end{rem}
\begin{teo}\label{teoge3}
$E_3$ is globally asymptotically stable if 
\begin{center}
$F_1(S^*,I_1^*)\left(2 - \frac{S^*}{S}- \frac{S g_1(S,I_1)}{S^* g_1(S^*,I_1^*)} \right)+F_2(S^*,I_2^*) \left(2-\frac{S^*}{S}-\frac{S g_2(S,I_2)}{S^* g_2(S^*,I_2^*)}\right)+r S^*\left( 3-\frac{S^*}{S}-\frac{V_1}{V_1^*}-\frac{S V_1^*}{S^* V_1}\right)+\mu S^* \left( 2- \frac{S^*}{S}-\frac{S}{S^*} \right)  +I_1 \left( S^* g_1(S,I_1)-\alpha_1 \right)+I_2 \left( {S}^* g_2(S,I_2)+ k  {V_1}^*-\alpha_2 \right)<0.$ 
\end{center}
\end{teo}
\begin{proof}
Assume $E_3$ exists. Consider the Lyapunov function
\begin{equation}
V(S,V_1,I_1,I_2)=S^* g\left(\frac{S}{S^*} \right)+{V_1}^*g\left(\frac{V_1}{{V_1}^*}\right)+{I_1}^*g\left(\frac{I_1}{{I_1}^*}\right)+{I_2}^*g\left(\frac{I_2}{{I_2}^*}\right). \nonumber
\end{equation}

Where $g(x) = x -1 -ln(x)$. Then $V ( S, V_1 , I_1 , I_2) \geq 0$ and $V ( S, V_1 , I_1 , I_2)$ attains zero at $E_3$.
 
Now, we need to show $\dot{V}<0$.
\begin{eqnarray}
\dot{V}&=&\left(1-\frac{{S}^*}{S} \right)\dot{S}+\left(1-\frac{{V_1}^*}{V_1} \right)\dot{V_{1}}+\left(1-\frac{{I_1}^*}{I_1} \right)\dot{I_1}+\left(1-\frac{{I_2}^*}{I_2} \right)\dot{I_2}\nonumber\\ 
&=&\left(1-\frac{{S}^*}{S} \right)\left(\Lambda- F_1(S,I_1)- F_2(S,I_2)-\lambda S \right)+\left(1-\frac{{V_1}^*}{V_1} \right)\left(rS-(\mu + k I_2)V_1 \right) \nonumber \\
&&+\left(1-\frac{{I_1}^*}{I_1} \right)(F_1(S,I_1)-\alpha_1 I_1)+\left(1-\frac{{I_2}^*}{I_2} \right)\left( F_2(S,I_2)+kI_2 V_1-\alpha_2 I_2\right) \nonumber\\
&=&\Lambda- F_1(S,I_1)- F_1(S,I_1) -\lambda S -\Lambda\frac{{S}^*}{S}+ I_1 {S}^*g_1(S,I_1)+ I_2S^* g_2(S,I_2) +\lambda {S}^*\nonumber \\
&&+rS-\mu V_1- k I_2 V_1 -rS\frac{{V_1}^*}{V_1}+\mu {V_1}^*+ k I_2 {V_1}^*+F_1(S,I_1)-\alpha_1 I_1-{I_1}^* f_1(S,I_1)\nonumber\\
&&+\alpha_1 {I_1}^*+F_2(S,I_2)+kI_2 V_1-\alpha_2 I_2-{I_2}^* f_2(S,I_2)-k{I_2}^* V_1+\alpha_2 {I_2}^* \nonumber
\end{eqnarray}
\begin{eqnarray}
&=&\left(F_1(S^*,I_1^*)+F_2(S^*,I_2^*) +\lambda S^* \right) -\lambda S -\left(F_1(S^*,I_1^*)+F_2(S^*,I_2^*) +\lambda S^* \right) \frac{{S}^*}{S} \nonumber \\
&&+ I_1 S^* g_1(S,I_1)+ I_2{S}^* g_2(S,I_2) +\lambda {S}^*+rS-\mu V_1-rS\frac{{V_1}^*}{V_1}+\mu {V_1}^*+ k I_2 {V_1}^*\nonumber\\
&&-\alpha_1 I_1- I_1^* f_1(S,I_1) +F_1(S^*,I_1^*)-\alpha_2 I_2-I_2^*f_2(S,I_2)-k{I_2}^* V_1+F_2(S^*,I_2^*) \nonumber\\
&&+kI_2^*V_1^*\nonumber\\ 
&=&\left(2 F_1(S^*,I_1^*)-F_1(S^*,I_1^*) \frac{S^*}{S}- I_1^* f_1(S,I_1)\right)+ \left(2 F_2(S^*,I_2^*)-F_2(S^*,I_2^*)  \frac{S^*}{S}\right)   \nonumber \\
&&-I_2^*f_2(S,I_2)+\left( 2\lambda S^*-\lambda S^* \frac{S^*}{S}-\lambda S+rS-rS\frac{{V_1}^*}{V_1}+rS^*-rS^*\frac{V_1}{V_1^*} \right)  \nonumber\\
&&+\left( I_1 S^* g_1(S,I_1)-\alpha_1 I_1\right)+\left( I_2{S}^* g_2(S,I_2)+ k I_2 {V_1}^*-\alpha_2 I_2\right) \nonumber\\
&=&F_1(S^*,I_1^*)\left(2 - \frac{S^*}{S}- \frac{S g_1(S,I_1)}{S^* g_1(S^*,I_1^*)} \right)+F_2(S^*,I_2^*) \left(2-\frac{S^*}{S}-\frac{S g_2(S,I_2)}{S^* g_2(S^*,I_2^*)}\right)   \nonumber \\
&&+r S^*\left(3-\frac{S^*}{S}-\frac{V_1}{V_1^*}-\frac{S V_1^*}{S^* V_1}\right)+\mu S^* \left( 2- \frac{S^*}{S}-\frac{S}{S^*} \right)  \nonumber\\
&&+I_1 \left(  S^* g_1(S,I_1)-\alpha_1 \right)+I_2 \left( {S}^* g_2(S,I_2)+ k  {V_1}^*-\alpha_2 \right). \nonumber
\end{eqnarray}
By the relation of geometric and arithmetic means, we conclude $\dot{V}<0$. Therefore $E_3$ is globally asymptotically stable.
\end{proof}

\section{Numerical simulations}
\begin{figure}
\centering
\includegraphics[scale=.45]{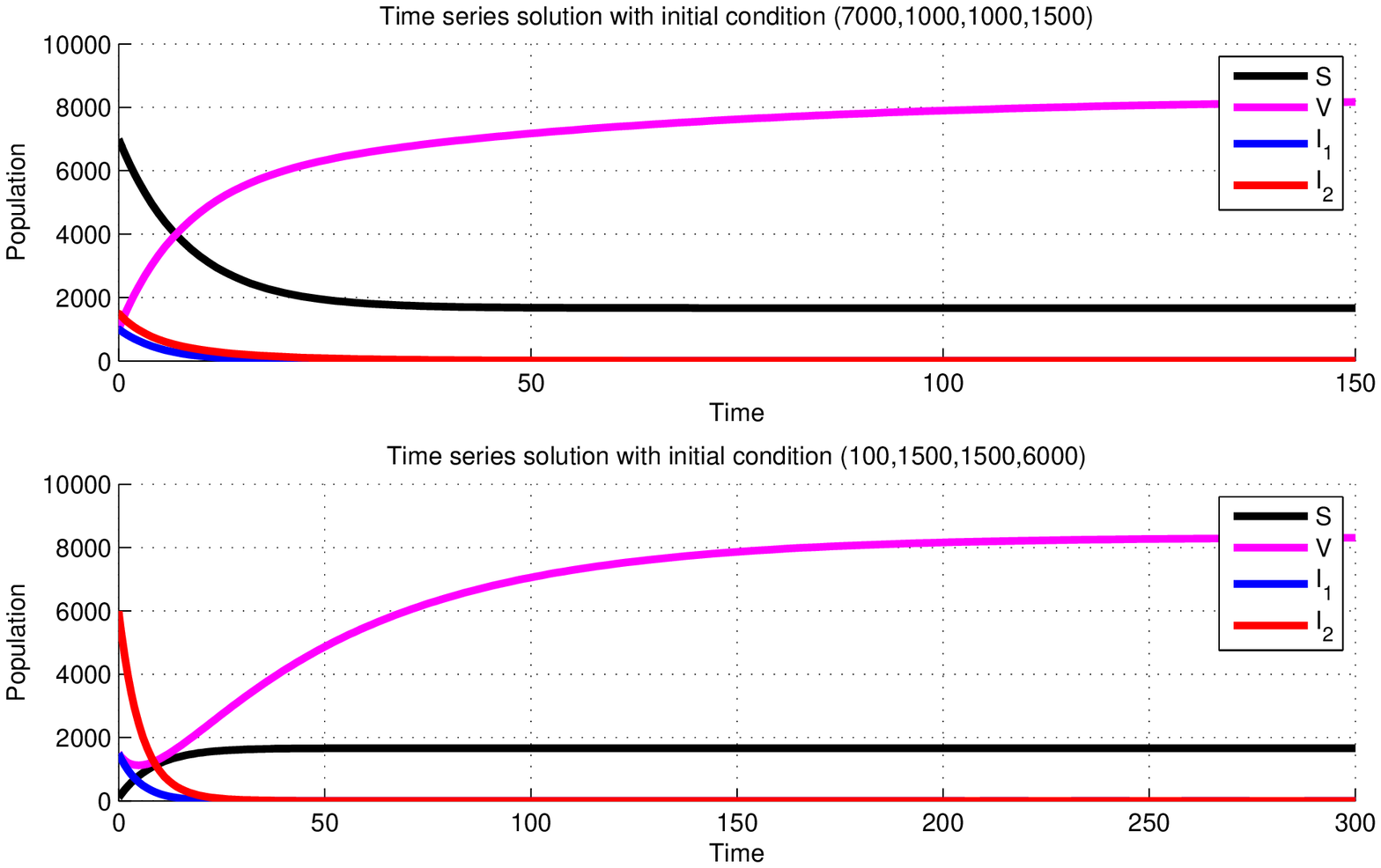}
\caption{Numerical simulation of \eqref{tw3} indicates that $E_0$ is globally asymptotically stable.}
\end{figure}
\begin{figure}
\centering
\includegraphics[scale=.45]{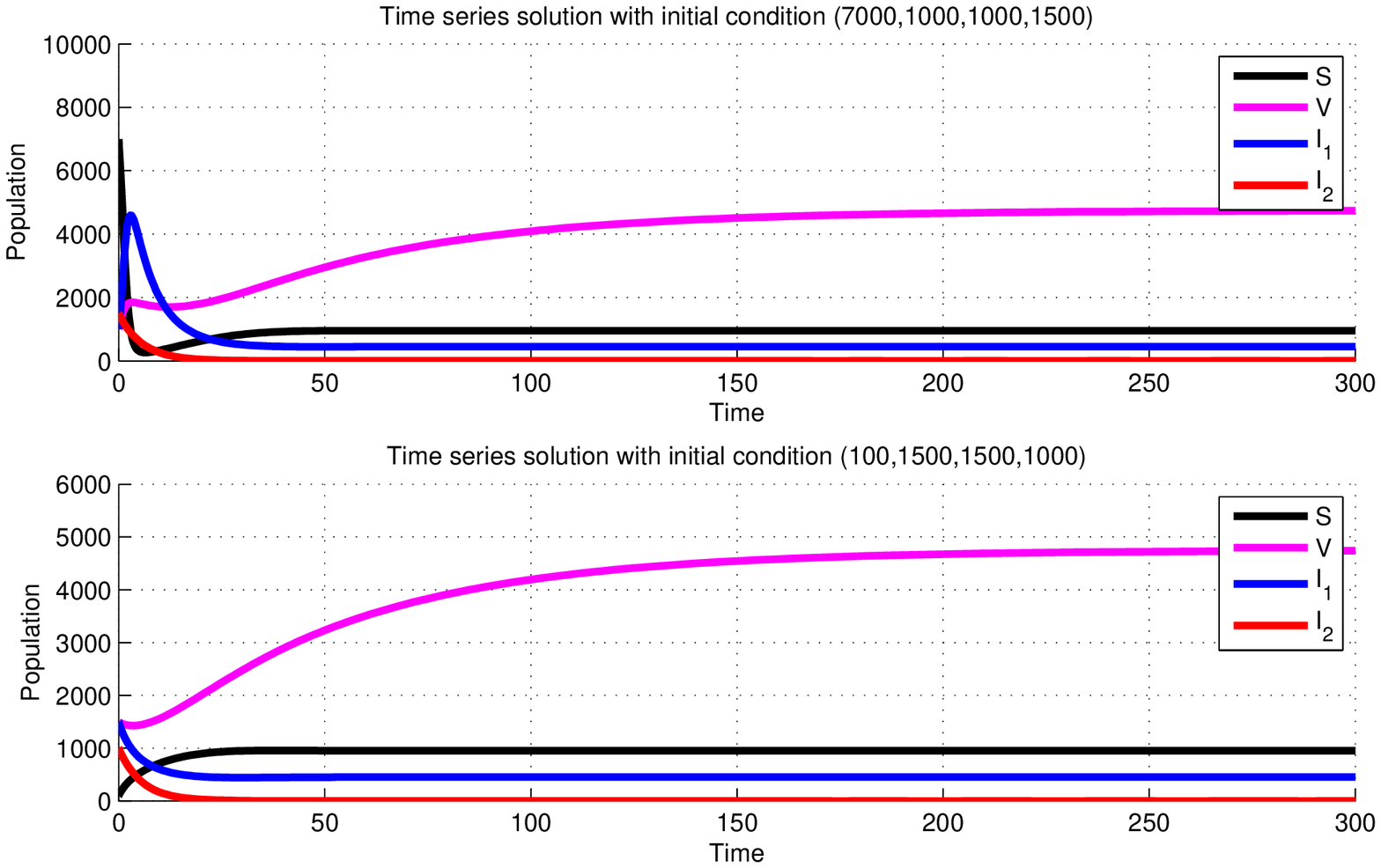}
\caption{Numerical simulation of \eqref{tw3} indicates that $E_1$ is globally asymptotically stable.}
\end{figure}

\begin{figure}
\centering
\includegraphics[scale=.45]{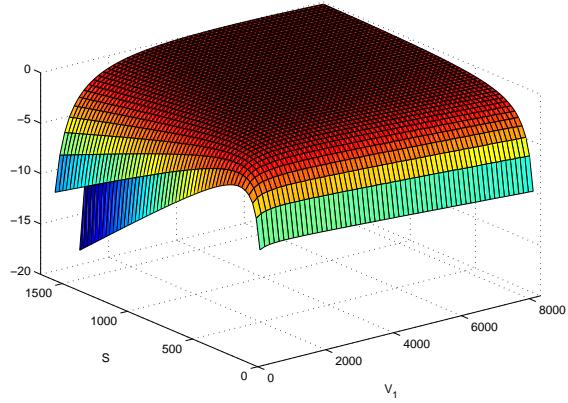}
\caption{Graph of $2-\frac{F_2(\tilde{S},\tilde{I_2})}{F_2(S,\tilde{I_2})}+\frac{S F_2(\tilde{S},\tilde{I_2})}{\tilde{S} F_2(S,\tilde{I_2})}-\frac{V_1}{\tilde{V_1}}-\frac{S\tilde{V_1}}{\tilde{S}V_1}$.}
\end{figure}

\begin{figure}
\centering
\includegraphics[scale=.45]{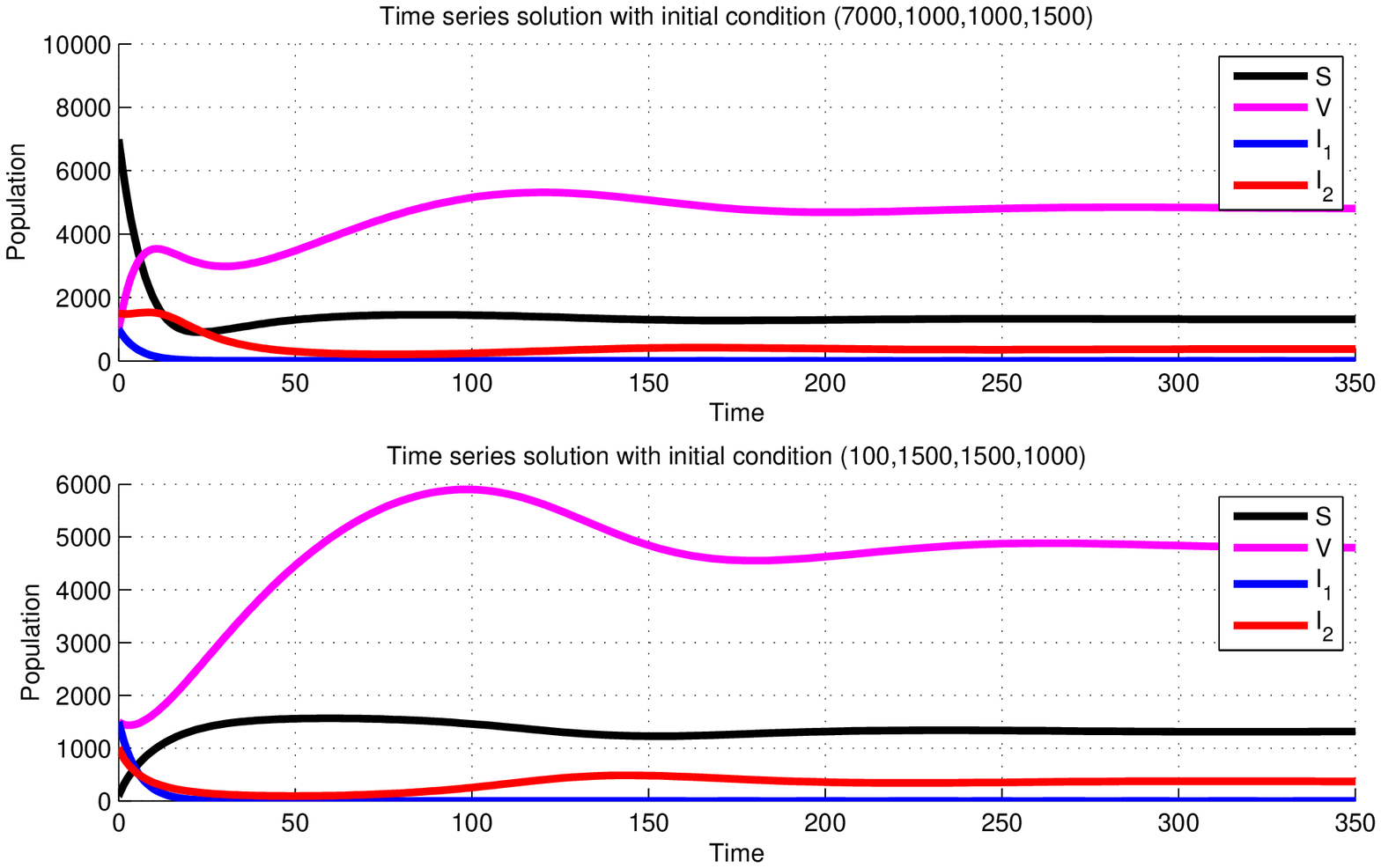}
\caption{Numerical simulation of \eqref{tw3} indicates that $E_2$ is globally asymptotically stable.}
\end{figure}

\begin{figure}
\centering
\includegraphics[scale=.45]{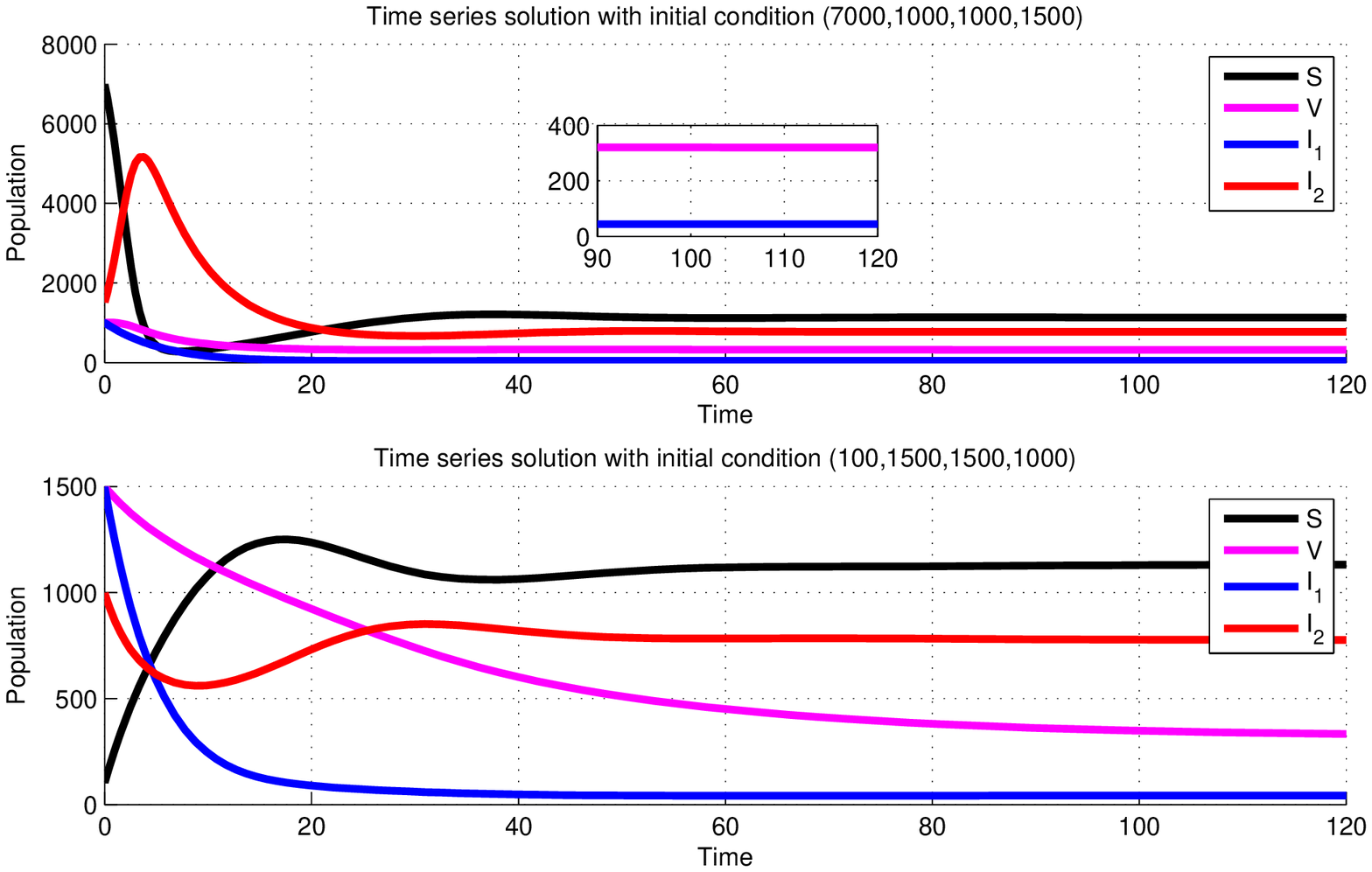}
\caption{Numerical simulation of \eqref{tw3} indicates that $E_3$ is globally asymptotically stable.}
\end{figure}

In this section, we present some numerical simulations of the solutions for system
\eqref{tw3} to verify the results obtained in section 4 and give examples to illustrate theorems in section 5. In system \eqref{tw3}, we set:

\begin{center}
$\displaystyle F_1(S,I_1)=\frac{\beta_1 SI_1}{1+\zeta_1 I_1^2}$, $\displaystyle F_2(S,I_2)=\frac{\beta_2 SI_2}{1+\zeta_2 S}$, $\Lambda=200$, $\gamma_1=0.07$, $\gamma_2=0.09$, $\mu=0.02$, $v_1=0.1$, $v_2=0.1$ and $k=0.00002$.
\end{center}
In this case 
\begin{center}
$\displaystyle g_1(S,I_1)=\frac{\beta_1}{1+\zeta_1 I_1^2}$, $\displaystyle g_2(S,I_1)=\frac{\beta_2}{1+\zeta_2 S}$, $\mathcal{R}_1=\frac{\beta_1 \Lambda}{\alpha_1\lambda}$ and $\mathcal{R}_2=\frac{\beta_2 \Lambda}{\alpha_2(\lambda+\zeta \Lambda)}+\frac{k r \Lambda}{\alpha_2 \mu \lambda}$. 
\end{center}
\begin{itemize}
\item[•] \textbf{Example 6.1.} In system \eqref{tw3}, we set $\beta_1=0.00003$, $r=0.1$, $\beta_2=0.0002$, $\zeta_1=0.7$ and $\zeta_2=0.9$. Then $S^0\approx 1667$, $V^0\approx 8333$, $\mathcal{R}_1\approx0.2632$  $\mathcal{R}_2\approx0.7947$. By theorem \ref{teoge0}, we see that the disease-free equilibrium $E_0$ is globally asymptotically stable. Numerical simulation illustrates our result (see Fig. 1).
\item[•]\textbf{Example 6.2.} In system \eqref{tw3}, we set $\beta_1=0.0002$, $r=0.1$, $\beta_2=0.0002$, $\zeta_1=0$  and $\zeta_2=0.9$. Then $\bar{S}\approx 950$, $\bar{V_1}\approx 4737$, $\bar{I_1}\approx 253$, $\mathcal{R}_1\approx1.7544$,  $\mathcal{R}_2\approx0.7947$. By theorem \ref{teoge1}, we see that the $E_1$ is globally asymptotically stable. Numerical simulation illustrates our result (see Fig. 2). 
\item[•]\textbf{Example 6.3.} In system \eqref{tw3}, we set $\beta_1=0.00003$, $r=0.1$, $\beta_2=0.0002$, $\zeta_1=0.7$  and $\zeta_2=0.001$. Then $\tilde{S}\approx 1314$, $\tilde{V_1}\approx 4814$, $\tilde{I_2}\approx 368$, $\mathcal{R}_1\approx 0.2632$,  $\mathcal{R}_2\approx 1.3889$ and $2-\frac{F_2(\tilde{S},\tilde{I_2})}{F_2(S,\tilde{I_2})}+\frac{S F_2(\tilde{S},\tilde{I_2})}{\tilde{S} F_2(S,\tilde{I_2})}-\frac{V_1}{\tilde{V_1}}-\frac{S\tilde{V_1}}{\tilde{S}V_1}\leq 0$ (see Fig.3). By theorem \ref{teoge2}, we see that the $E_2$ is globally asymptotically stable. Numerical simulation illustrates our result (see Fig. 4).
\item[•]\textbf{Example 6.4.} In system \eqref{tw3}, we set $\beta_1=0.0002$, $r=0.01$, $\beta_2=0.0002$,  $\zeta_1=0.0001$ and $\zeta_2=0.0001$. Then $\mathcal{R}_1\approx 7.0175$,  $\mathcal{R}_2\approx 4.1270$, $\tilde{S}\approx 1134$, $\bar{S}\approx 5310$, $\bar{V_1} \approx 2655$, $\bar{R_2}\approx 3.555$ and $\tilde{R_1}\approx 1.194$. Then by theorem \ref{teoee3}, $E_3=(S^*,V_1^*,I_1^*,I_2^*)$ exists ($S^*\approx 1133$, $V_1^*\approx 320$, $I_1^* \approx 44$, $I_2^* \approx 774$), Also $c_1\approx 0.2501$ $c_2\approx  0.0171$ $c_3\approx 3.4759\times 10^{-04}$ $c_4\approx 3.4759\times 3.9242 10^{-06}$, $c_1 c_2-c_3^2\approx  0.0043$ and $c_1c_2c_3-c_3^2-c_1^2c_4\approx  1.1218e\times 10^{-06}$ by theorem \ref{teole3}, $E_3$ is locally asymptotically stable. Also $ E_3 $ satisfies {\small $F_1(S^*,I_1^*)\left(2 - \frac{S^*}{S}- \frac{S g_1(S,I_1)}{S^* g_1(S^*,I_1^*)} \right)+F_2(S^*,I_2^*) \left(2-\frac{S^*}{S}-\frac{S g_2(S,I_2)}{S^* g_2(S^*,I_2^*)}\right)+\mu S^* \left( 2- \frac{S^*}{S}-\frac{S}{S^*} \right)  +I_1 \left( S^* g_1(S,I_1)-\alpha_1 \right)+I_2 \left( {S}^* g_2(S,I_2)+ k  {V_1}^*-\alpha_2 \right)<0$}. By theorem \ref{teoge3}, we see that the $E_3$ is globally asymptotically stable. Numerical simulation illustrates our result (see Fig. 5).  
\end{itemize}
\section{Concluding remarks}
In this paper,we studied a system of ordinary differential equations to model the disease dynamics
of two strains of influenza with only one vaccination for strain 1 being implemented, and general incidence rate for strain 1 and strain 2. We obtained four equilibrium points:
\begin{itemize}
\item $E_0$ disease free equilibrium, $I_1$   and $I_2 $ are both zero.
\item $E_1$ single-strain-infection equilibria, $I_2 $ are zero.
\item $E_2$ single-strain-infection-equilibria, $I_1 $ are zero.
\item $E_3$ double-strain-infection equilibrium, $I_1$   and $I_2 $ are both positive.
\end{itemize}

We have investigated the topics of existence and non-existence of various equilibria and their stabilities. We also used next generation matrix method to obtain two threshold quantities $\mathcal{R}_1$ and $\mathcal{R}_2$, called the basic reproduction ratios for strain 1 and 2 respectively. It was shown that the global stability of each of the equilibrium points depends on the magnitude of these threshold quantities.  More precisely, we have proved the following:
\begin{itemize}
\item If $\mathcal{R}_0<1$ the disease free equilibrium $E_0$ is globally asymptotically stable. If $\mathcal{R}_0>1$, then $E_0$ is unstable.  
\item If $\mathcal{R}_1>1$ the model \eqref{tw3} admits a single-strain-infection-equilibria $E_1$. Also if $\mathcal{R}_2<1$ then $E_1$ is globally asymptotically stable.
\item If $\mathcal{R}_2>1$ the model \eqref{tw3} admits a single-strain-infection equilibria $E_2$. Also if 
$2-\frac{F_2(\tilde{S},\tilde{I_2})}{F_2(S,\tilde{I_2})}+\frac{S F_2(\tilde{S},\tilde{I_2})}{\tilde{S} F_2(S,\tilde{I_2})}-\frac{V_1}{\tilde{V_1}}-\frac{S\tilde{V_1}}{\tilde{S}V_1}<0$,  then $E_2$ is globally asymptotically stable.
\item If $\bar{\mathcal{R}_2}>1$ and $\tilde{\mathcal{R}_1}>1$ the model \eqref{tw3}  admits a double strain infection equi\-li\-brium $E_3$. Also if 
{\small $F_1(S^*,I_1^*)\left(2 - \frac{S^*}{S}- \frac{S g_1(S,I_1)}{S^* g_1(S^*,I_1^*)} \right)+F_2(S^*,I_2^*) \left(2-\frac{S^*}{S}-\frac{S g_2(S,I_2)}{S^* g_2(S^*,I_2^*)}\right)$ $+r S^*\left( 3-\frac{S^*}{S}-\frac{V_1}{V_1^*}-\frac{S V_1^*}{S^* V_1}\right)+\mu S^* \left( 2- \frac{S^*}{S}-\frac{S}{S^*} \right)  +I_1 \left( S^* g_1(S,I_1)-\alpha_1 \right)+I_2 \left( {S}^* \right.$ $\left.g_2(S,I_2)+k  {V_1}^*-\alpha_2 \right)<0$}. Then $E_3$ is globally asymptotically stable.
\end{itemize} 
In order to discuss the meaning of our mathematical results, let us rewrite the two key indirect parameters $\mathcal{R}_1$ and $\mathcal{R}_2$ in terms of the direct model parameters as shown below:
\begin{equation}
\mathcal{R}_1=\frac{f_1\left(\frac{\Lambda}{r+\mu},0 \right)}{\alpha_1}, \ \ \ \mathcal{R}_2=\frac{f_2\left(\frac{\Lambda}{r+\mu},0 \right)}{\alpha_2}+\frac{k r \Lambda}{\alpha_2 \mu (r+\mu)} \nonumber 
\end{equation}
Also the derivative of $\mathcal{R}_2$ with respect to $r$ is,
\begin{equation}
\frac{\Lambda}{\alpha_2(r+\mu)^2}\left(-\frac{\partial f_2\left(\frac{\Lambda}{r+\mu},0\right)}{\partial S}+k \right) \nonumber
\end{equation}
Note that $\mathcal{R}_1(r)$ is decreasing and $\mathcal{R}_2(r)$ depends on  $\frac{\partial f_2\left(\frac{\Lambda}{\mu},0\right)}{\partial S}$. Now we will analyse some cases of incidence rate.
\begin{itemize}
\item[(C1)]$F_i(S,I)=\beta_i SI_i$, then $\frac{\partial f_2\left(\frac{\Lambda}{\mu},0\right)}{\partial S}=\beta_i$. 
\item[(C2)]$F_i(S,I)=\frac{\beta_i SI_i} {1 + \zeta_i S}$, then $\frac{\partial f_2\left(\frac{\Lambda}{\mu},0\right)}{\partial S}=\frac{\beta_i} {1 + \zeta_i \left(\frac{\Lambda}{r+\mu} \right)}$.
\item[(C3)]$F_i(S,I)=\frac{\beta_i SI_i} {1 + \zeta_i I_i^2}$, then $\frac{\partial f_2\left(\frac{\Lambda}{\mu},0\right)}{\partial S}=\beta_i$.
\end{itemize}
Note that for (C1) and (C3) $\mathcal{R}_2(r)$ is increasing if $\beta_i<k$,  $\mathcal{R}_2(r)$ is decreasing if $\beta_i>k$ and $\mathcal{R}_2(r)$ is constant if $\beta_i=k$. For (C2) $\mathcal{R}_2(r)$ is increasing if $\beta_i\leq k$ ($\zeta\neq 0$). If $\beta_i> k$ $\mathcal{R}_2(r)$ is increasing if $\frac{\zeta_i k \Lambda}{\beta_i-k}-\mu<r$ and decreasing if $\frac{\zeta_i k \Lambda}{\beta_i-k}-\mu>r$.

Also for if the force of infection of strain 1 is (C2), then
$\mathcal{R}_1=\frac{\beta_1}{\alpha_1(1 + \zeta_1 S^0)}$, note that $\mathcal{R}_1$ is decreasing in $\zeta_1$. If the force of infection of strain 2 is (C2), then $\mathcal{R}_2=\frac{\beta_2  \Lambda}{\alpha_2(\lambda+\zeta_2 \Lambda)}+\frac{k r \Lambda}{\alpha_2 \mu \lambda}$, note that $\mathcal{R}_2$ is decreasing in $\zeta_2$.

With the above information and the results in Section 5, the vaccination is always beneficial for controlling strain 1, its impact on strain 2 depends on the force of infection. If the forced of infection of strain 2 is (C2), the impact of vaccination depends of values of $\beta_2$, $k$ and $\zeta_2$.  If $\zeta_2=0$; if $\beta_2 > k$ it plays a positive role, and if $\beta_2 < k$, it has a negative impact in controlling strain 2. This is reasonable because larger $k$ (than $\beta_2$) means that vaccinated individuals are more likely to be infected by strain 2 than those who are not vaccinated, and thus, is helpful to strain 2. Smaller $k$ (than $\beta_2$) implies the opposite. If $\zeta_2 \neq 0$; if $\beta_2 > k$, it plays a positive role and if $\beta_2 < k$, not necessarily has a negative impact in controlling strain 2, because $\mathcal{R}_2(\zeta_2)$ is decreasing, i.e., for $ \zeta_2 $ sufficiently large it can play a positive role. This is reasonable because larger $k$ (than $\beta_2$) means that vaccinated individuals are more likely to be infected by strain 2 than those who are not vaccinated, but if $\zeta_2$  is large it means that the population is taking precautions to avoid the infection of strain 2.

Finally, we remark that our model can be improved and generalized. For example, the model can be modified to contain two vaccinations, also we can consider the effect of time delay on vaccine-induced immunity and incorporate the diffusion of individuals. We leave these problems for future investigation.

\textbf{Acknowledgments}This work was supported by Sistema Nacional de Investigadores (15284) and Conacyt-Becas.
\section*{References} 
\begin{itemize}
\item[1)]  Influenza (Seasonal). (2018). Retrieved 21 April 2019, from https://www.who.int/news-room/fact-sheets/detail/influenza-(seasonal).

\item[2)] Rahman A. and Zou X. Flu epidemics: a two strain flu model with a single vaccination. Journal of Biological Dynamics 5,376-390 (2011).

\item[3)]Medina M., Vintiñi E., Villena J., Raya R. and Alvarez S. Lactococcus lactisas an adjuvant and delivery vehicle of antigens against pneumococcal respiratory infections. Bioengineered Bugs 1, 313-325 (2010).

\item[4)]Chowell G., Ammon C., Hengartner N. and Hyman, J. Transmission dynamics of the great influenza pandemic of 1918 in Geneva, Switzerland: Assessing the effects of hypothetical interventions. Journal Of Theoretical Biology 241, 193-204 (2006). 

\item[5)]Mills C., Robins J. and Lipsitch M. Transmissibility of 1918 pandemic influenza. Nature 432, 904-906 (2004).

\item[6)] Cauchemez S., Valleron A., Boëlle P., Flahault A. and Ferguson N. Estimating the impact of school closure on influenza transmission from Sentinel data. Nature 452, 750-754 (2008). 

\item[7)] Capasso V. and Serio G. A Generalization of the Kermack-Mckendrick deterministic epidemic model,  Mathematical Biosciences 42, 43-61 (1978).

\item[8)]Baba I. and Hincal E. A model for influenza with vaccination and awareness. Chaos, Solitons and Fractals  106, 49–55 (2018).

\item[9)]Baba I. and Hincal E. Global stability analysis of two-strain epidemic model with
bilinear and non-monotone incidence rates. The European Physical Journal Plus 132 (2017).

\item[10)]Wang L., Zhang X. and Liu Z. An SEIR Epidemic Model with Relapse and General Nonlinear Incidence Rate with Application to Media Impact. Qualitative Theory Of Dynamical Systems 17, 309-329 (2017). 
\item[11)]Van den Driessche P. and Watmough J. Reproduction numbers and sub-threshold
endemic equilibria for compartmental models of disease transmission, Mathematical Biosciences 180, 29-48 (2002).
\item[12)]Thieme H.R. , Persistence under relaxed point-dissipativity (with application to an endemic model), SIAM Journal on Mathematical Analysis 24, 407-435 (1993).
\item[13)]Butler G. J., Freedman H. I. and Waltman P. Uniformly persistent systems. Proceedings of the American Mathematical Society 96, 425-430 (1986).
\end{itemize}
\end{document}